\newtheorem{proposition}{Proposition}
\numberwithin{equation}{section}
\numberwithin{proposition}{section}
\title{INFERRING PARAMETERS OF PREY SWITCHING IN A PLANKTON 1 PREDATOR--2 PREY SYSTEM WITH A LINEAR PREFERENCE TRADEOFF}
\author{Sofia H. Piltz \thanks{Corresponding author, Department of Applied Mathematics and Computer Science, Technical University of Denmark, Asmussens all\'{e}, Bygning 303B, 2800 Kongens Lyngby, Denmark and Department of Mathematics, University of Michigan, 2074 East Hall, Ann Arbor, Michigan, 48109-1043, USA \mbox{(\href{mailto:piltz@umich.edu}{piltz@umich.edu})}.}
\and  Lauri Harhanen \thanks{Department of Applied Mathematics and Computer Science, Technical University of Denmark, Asmussens all\'{e}, Bygning 303B, 2800 Kongens Lyngby, Denmark \mbox{(\href{mailto:lauri.harhanen@alumni.aalto.fi}{lauri.harhanen@alumni.aalto.fi})}.} 
\and Mason A. Porter\thanks{Department of Mathematics, University of California, Los Angeles, Los Angeles, California 90095, USA; Oxford Centre for Industrial and Applied Mathematics, Mathematical Institute, University of Oxford, Andrew Wiles Building, Radcliffe Observatory Quarter, Woodstock Road, Oxford, OX2 6GG, UK; and CABDyN Complexity Centre, University of Oxford, Oxford, OX1 1HP, UK \mbox{(\href{mailto:mason@math.ucla.edu}{mason@math.ucla.edu})}.}
\and Philip K. Maini \thanks{Wolfson Centre for Mathematical Biology, Mathematical Institute, University of Oxford, Andrew Wiles Building, Radcliffe Observatory Quarter, Woodstock Road, Oxford, OX2 6GG, UK and CABDyN Complexity Centre, University of Oxford, Oxford, OX1 1HP, UK. \mbox{(\href{mailto:maini@maths.ox.ac.uk}{maini@maths.ox.ac.uk})}.}
}
\begin{document}
\maketitle

\begin{abstract}

We construct two ordinary-differential-equation models of a predator feeding adaptively on two prey types, and we evaluate the models' ability to fit data on freshwater plankton. We model the predator's switch from one prey to the other in two different ways: (1) smooth switching using a hyperbolic tangent function; and (2) by incorporating a parameter that changes abruptly across the switching boundary as a system variable that is coupled to the population dynamics. We conduct linear stability analyses, use approximate Bayesian computation (ABC) combined with a population Monte Carlo (PMC) method to fit model parameters, and compare model predictions quantitatively to data for ciliate predators and their two algal prey groups collected from Lake Constance on the German--Swiss--Austrian border. We show that the two models fit the data well when the smooth transition is steep, supporting the simplifying assumption of a discontinuous prey switching behavior for this scenario. We thus conclude that prey switching is a possible mechanistic explanation for the observed ciliate--algae dynamics in Lake Constance in spring, but that these data cannot distinguish between the details of prey switching that are encoded in these different models.
\end{abstract}

\paragraph{Keywords:} prey switching, Lotka--Volterra interactions, linear stability analysis, planktonic ciliate--algae dynamics, smoothening out a discontinuous diet switch, parameter fitting using PMC--ABC

\section{Introduction}\label{section_introduction} 

Ciliates are eukaryotic single cells that propel using small protuberances (called \emph{cilia}) that project from their cell body. They feed on small algae and are an important link between the bottom and higher levels of aquatic food webs \cite{TirokGaedke2007}. In addition to seasonal variation, ciliates and their algal prey populations vary at shorter-than-seasonal temporal scales. During years when the spring bloom lasts for several weeks (corresponding to approximately 15--30 ciliate generations), algal and ciliate biomasses exhibit recurring patterns of growth followed by decline \cite{TirokGaedke2007}. Ciliates have different modes of predatorial behavior, and they can be categorized roughly in terms of more-selective or less-selective feeding habits \cite{Verity1991}. Some ciliate species can be construed as selective predators, because they hunt as ``interception feeders" that scavenge on food particles and intercept them directly. In contrast, ``filter-feeder" ciliates sieve suspended food particles and are an example of a less-selective ciliate species. A laboratory experiment on ciliate predator and phytoplankton prey species 
in Lake Constance reported prey preference and selective feeding in ciliates \cite{MullerSchlegel1999}, and it has been suggested that predator--prey interactions between diverse predator and prey plankton communities are the driving force for the sub-seasonal temporal variability observed in ciliate--algal dynamics, especially during periods of the year in which environmental conditions are relatively stable \cite{TirokGaedke2010}. 

In the present paper, we aim to obtain biological insight into the sub-seasonal oscillations in ciliate populations during spring in Lake Constance and more generally into the ecological concept of \emph{prey switching} \cite{Murdoch1969}, in which predators express a preference
, e.g., for
more-abundant prey. To do this, we construct multiple modeling frameworks for a ciliate predator that adaptively changes its diet in response to changes in the abundances of its two prey.\footnote{``Prey switching'' in a system of 1 predator and 1 prey refers to a situation in which predation is low at low prey densities but saturates quickly at a large value when the prey is abundant. In such a scenario, one can model the predator--prey interaction using a Holling type-III functional response \cite{Holling1965,Gauseetal1936}.} Using ciliate--algae interactions in Lake Constance as an example, we focus on adaptive feeding of a predator group between its two different types of prey to investigate both the characteristics of prey switching (specifically, whether it is best described with a steep or gradual switching function) and whether it is justified to use a reduced modeling framework (specifically, a \emph{piecewise-smooth dynamical system}\footnote{Piecewise-smooth dynamical systems are a class of discontinuous systems that describe behavior using smooth dynamics of variables that alternate with abrupt events \cite{pw-sBook,pws-scholarpedia}.}) as a simplifying approximation of a smooth system. 

One can model prey switching with smooth dynamical systems by considering either density-dependent switching \cite{AbramsMatsuda2003} or density-independent switching \cite{Postetal2000}, or by using information on which prey type was last consumed \cite{vanLeeuwen2007,vanLeeuwen2013}. In contrast, a piecewise-smooth system arises when one assumes that a switch in a predator's feeding behavior depends on prey abundances. For example, one can posit that a predator behaves as an optimal forager, as its choice to switch prey depends on which diet composition maximizes its rate of energy intake \cite{Stephens1986,Krivan1996,KrivanSikder1999,BoukalKrivan1999,KrivanEisner2003, Krivan2007}. Using a piecewise-smooth model, it was suggested recently that prey switching is a possible mechanistic explanation for the dynamics observed in ciliate and algae populations in Lake Constance \cite{OurPaper}. 

In addition to their ecological applications, piecewise-smooth dynamical systems occur in a wide variety of applications \cite{pw-sBook}, ranging from mechanical oscillators such as a rocking block (see, e.g, \cite{hogan1989RockingBlockdynamics}) to relay-feedback systems (in which an electrically operated switch is used to control a process or an electromechanical system \cite{bernardo2001RelFBControlPWS}). Other biological applications include gene regulatory networks, in which transcription factors either initiate or inhibit the production of proteins after some threshold concentration has been reached \cite{Glass1975, Casey2006}, and conceptual climate models, in which an abrupt change in a piecewise-smooth system can represent a transition between different regimes in, e.g., large-scale ocean circulation \cite{stommel1961thermohaline} or the Earth's reflectivity due to ice cover \cite{abbot2011jormungand}. In a piecewise-smooth dynamical system, phase space is divided into two or more smooth regions by one or more \emph{switching manifolds} that mark transitions between the regions. For prey switching, each region corresponds to one of the predator's diet choices, so a model that describes the dynamics has a discontinuous right-hand side. Specifically, in this example, the system satisfies a different set of ordinary differential equations (ODEs) in different regions of phase space. In a piecewise-smooth framework, one assumes that a switch from one diet to another occurs instantaneously. Consequently, piecewise-smooth dynamical systems are also used to approximate nonlinear terms, such as sigmoidal or cubic functions, in models of systems that have sharp transitions between two or more states. 

In both ecology and numerous other applications, it is important to conduct detailed investigations into different approaches for how to model sharp changes in governing dynamics. On one hand, it is unclear whether there exist ``discontinuous predators" who switch their feeding strategy instantaneously, as assumed in a piecewise-smooth model for prey switching. On the other hand, we have not found evidence for any of the possible smooth transition functions that one can choose to model prey switching. By using data on ciliate and algal population dynamics, we illustrate an important example situation in which exploiting different modeling frameworks increases understanding of prey switching and allows one to gain biological insight into its profile. Consequently, we can conclude that it is justified to use a piecewise-smooth dynamical system, which has fewer parameters than the associated smooth models introduced in this paper, as a simplifying approximation of a smooth dynamical system. We also show that the piecewise-smooth model in \cite{OurPaper} is both biologically and mathematically consistent as the limit of two smooth systems, which we construct by (1) using a hyperbolic tangent as a transition function from one diet choice to another and (2) incorporating a parameter that changes abruptly across the discontinuity in the model in \cite{OurPaper} as a system variable with dynamics on a time scale comparable to that of the population dynamics of a predator and its two prey. In the second construction, we examine a system with one more dimension than the corresponding piecewise-smooth system.\footnote{Mathematically, one can derive different smooth approximations to the same piecewise-smooth system either by ``smoothing out'' a discontinuity of a piecewise-smooth system using a differentiable transition function of sigmoidal form \cite{Colombo2011} or by ``regularizing'' a piecewise-smooth dynamical system \cite{KuehnMTS2015} into a singular perturbation problem \cite{Hinch1991,Jones1995} that includes multiple time scales by ``blowing up'' the switching boundary \cite{Sotomayoretal1996,Teixeira2012}. In this work, we do not consider regularizations that include multiple time scales. A subset of us were among the authors of previous work \cite{ourfastslowPaper} on a multiple time-scale system describing the dynamics of one predator and two prey populations in the presence of rapid evolution of the predator's diet choice. One can also include nonlinear terms when constructing a smooth dynamical system by smoothing out an instantaneous switch by using the method developed in \cite{Jeffrey2014methodproposed,Jeffrey2014methodContinued}. These nonlinear terms take into account small effects that are observable only during a switch and vanish for the corresponding piecewise-smooth system \cite {JeffreySmoothingTautologies}. Comparing different smoothed out or regularized systems both with each other and with an associated piecewise-smooth system is crucial for understanding the correspondence and transition between a piecewise-smooth system and its smooth analogs. This type of relationship between piecewise-smooth and smooth systems that they approximate was investigated from a theoretical point of view in \cite{hogan2004effect,dankowicz2007purposeful}.}

The remainder of our paper is organized as follows. In Section \ref{section_Models}, we present and briefly discuss the equations for the 1 predator--2 prey piecewise-smooth model from \cite{OurPaper}. This piecewise-smooth dynamical system includes a tilted switching manifold that marks a transition between two smooth parts of phase space. Biologically, these parts represent the predator's adaptive feeding behavior and its two different diet choices: on one side of the switching manifold, the predator's diet consists solely of its preferred prey; on the other side, it consists solely of the alternative prey. We consider two possible regularizations of the model in Sections \ref{subsection_smoothAnalogI} and \ref{subsection_smoothAnalogII}, and we derive analytical expressions and carry out linear stability analysis for the coexistence equilibrium (i.e., where all three species coexist at non-zero densities) for each of the two smooth models. We are interested in the coexistence steady states, because the data that we use includes coexistence of predators and multiple prey. In Section \ref{section_dataComparison}, we discuss and use these data on adaptively feeding plankton predators to fit model parameters and compare the biomass predictions of our two smooth models. We summarize and discuss similarities and differences in model behavior and model assumptions between the piecewise-smooth system (analyzed in \cite{OurPaper}) and its two smooth analogs (analyzed in this paper) in Section \ref{section_discussion}, and we conclude our study in Section \ref{section_conclusions}. We give additional details about our calculations and analysis in a trio of appendices.


\section{Methods}\label{section_Models}

We construct two smooth analogs of a piecewise-smooth dynamical system describing a predator population $z$ that can adjust the extent of its consumption of its preferred prey $p_1$. When not consuming the preferred prey $p_1$, the predator feeds on its alternative prey $p_2$. Before introducing our two smooth models (see Sections \ref{subsection_smoothAnalogI} and \ref{subsection_smoothAnalogII}), we present the model equations for the piecewise-smooth dynamical system that was developed in \cite{OurPaper}. For each of the three models, we consider standard nonlinearities in the form of Lotka--Volterra predator--prey interactions. Although these nonlinearities are standard, it is convenient for us to use non-standard notation for the model coefficients that describe them. This notation allows us both to derive the switching condition introduced previously in \cite{OurPaper} and to compare the two smooth models that we develop in the present paper to this piecewise-smooth system.

We assume that the predator switches to consume only an alternative prey $p_2$ when it maximizes its fitness by doing so. To describe this situation, Ref.~\cite{OurPaper} developed the following piecewise-smooth dynamical system:
\begin{equation}
	\dot{{\mathbf x}}=
 	\left[\begin{array}{c} \dot{p_1}\\
 	\dot{p_2}\\ 
 	\dot{z}\\ \end{array}\right]=
 	\left\{ \begin{array}{rl}
		f_+= \left[\begin{array}{c}
		(r_1-\beta_1z)p_1\\
 			r_2p_2\\
 			(eq_1\beta_1p_1-m)z
 		\end{array}\right]\,,
& 		\mbox{ if $h=\beta_1p_1-a_q\beta_2p_2>0$} \\
 		f_-= \left[\begin{array}{c}
 		r_1p_1\\
 		(r_2-\beta_2z)p_2\\
 		(eq_2\beta_2p_2-m)z
 		\end{array} \right]\,,
& \mbox{ if $h=\beta_1p_1-a_q\beta_2p_2<0$}
 	\end{array} \right.\,,
\label{1pred2prey}
\end{equation}
where $r_1$ and $r_2$ (with $r_1>r_2>0$) are the respective per capita growth rates of the preferred and alternative prey, $\beta_1$ and $\beta_2$ are the respective death rates of the preferred and alternative prey due to predation, $m>0$ is the predator per capita death rate per day, and $e>0$ is the predator conversion efficiency. The parameters $q_1$ and $q_2$ are nondimensional parameters that represent the predator's respective desire to consume the preferred and alternative prey. Thus, the proportion of predation that goes into predator growth is given by $eq_1$ for the preferred prey and by $eq_2$ for the alternative prey. In other words, the model in Equation~\eqref{1pred2prey} preserves the idea of a predator's benefit from consuming prey being in proportion to predation amount, as is the case in the standard Lotka--Volterra model. Specifically, we consider constant conversion efficiency as a fraction of $e$ for each prey, so the benefit from feeding on the preferred and alternative prey are represented by $eq_1$ and $eq_2$, respectively. With $0<q_2<q_1<1$, we emphasize the reduced benefit that the predator obtains from the alternative prey compared to its preferred prey. 
The difference in the benefit is also where the assumed tradeoff lies, as the alternative prey $p_2$ invests energy in building predator defense mechanisms and is thus a ``less edible'' prey compared to the preferred prey $p_1$, which does not invest energy in predator defense mechanisms. Consequently, we assume that the growth rate of the preferred prey is larger than that of the alternative prey (i.e., $r_1>r_2$).  

In our constructions (see Sections \ref{subsection_smoothAnalogI} and \ref{subsection_smoothAnalogII}) of two smooth analogs of \eqref{1pred2prey} and to facilitate our comparison between the piecewise-smooth and smooth systems, we take $\beta_1=\beta_2=1$ for simplicity. In doing so, we assume that the predator exhibits adaptive feeding behavior by adjusting its preference (rather than its attack rate) to the governing prey densities. The parameter $a_q$ corresponds mathematically to the slope of the tilted switching manifold $h=\beta_1p_1-a_q\beta_2p_2=0$ between the two vector fields in \eqref{1pred2prey}. Biologically, $a_q$ is the slope of the assumed linear tradeoff in the predator's preference for prey. The reader is referred to \cite{OurPaper} for a biological justification of these model assumptions, analysis of the model \eqref{1pred2prey}, and inferred parameter values for data from Lake Constance. 

In the present paper, we construct and carry out linear stability analysis of two novel (to our knowledge) smooth models for an adaptively feeding predator and its two prey. First, in Section \ref{subsection_smoothAnalogI}, we formulate the model in \eqref{1pred2prey} as a three-dimensional smooth dynamical system with hyperbolic tangent functions. Second, in Section \ref{subsection_smoothAnalogII}, we construct a four-dimensional smooth analog of \eqref{1pred2prey} by supposing that the desire to consume the preferred prey $q_1$ changes between $1$ and $0$ across the discontinuity in the piecewise-smooth system \eqref{1pred2prey} as a system variable that changes along with the population dynamics. 

\subsection{Smooth model I}\label{subsection_smoothAnalogI}

We construct a smooth analog (which we call our ``smooth model I'') of \eqref{1pred2prey} using a hyperbolic tangent as a transition function. This yields the following equations of motion:
\begin{align}
	\dot{p_1} &= r_1p_1-\beta_1p_1z\left(\frac{1+\text{tanh}(k(\beta_1p_1-a_q\beta_2p_2))}{2}\right)
\equiv f(p_1,p_2,z)\,, \nonumber \\ 
	\dot{p_2} &= r_2p_2-\beta_2p_2z\left(\frac{1-\text{tanh}(k(\beta_1p_1-a_q\beta_2p_2))}{2}\right)\equiv g(p_1,p_2,z)\,, \label{smooth_1pred2prey}\\
	\dot{z} &= eq_1\beta_1p_1z\left(\frac{1+\text{tanh}(k(\beta_1p_1-a_q\beta_2p_2))}{2}\right)
+eq_2\beta_2p_2z\left(\frac{1-\text{tanh}(k(\beta_1p_1-a_q\beta_2p_2))}{2}\right)-mz \equiv l(p_1,p_2,z)\nonumber \,,
\end{align}
where $k$ determines the steepness of the transition function and thus of switches in the predator's feeding behavior. Equation~\eqref{smooth_1pred2prey} incorporates Lotka--Volterra dynamics, and $eq_1$ and $eq_2$ (where $0<q_2<q_1<1$) can be construed, respectively, as one predator's benefit from eating its preferred and alternative prey. In Section \ref{subsection_comparisonwDataSysI}, we will infer values of $k$ that best fit data from a particular freshwater plankton system. The data were collected in Lake Constance between 1979 and 1999, were presented originally in \cite{TirokGaedke2006,TirokGaedke2007} and were subsequently analyzed further in several papers (e.g., \cite{TirokGaedke2010,TirokGaedke2007a}). See Section \ref{section_dataComparison} for a description of the data.


\subsubsection{Linear stability analysis of smooth model I} 

In this work, we are interested in a steady state of \eqref{smooth_1pred2prey} with $p_1,p_2,z>0$. We calculate
\begin{align}
	f &= p_1\left(r_1-z\left(\frac{1+\text{tanh}(k(p_1-a_qp_2))}{2}\right)\right)=0 \nonumber \\
&\Rightarrow z\left(\frac{1+\text{tanh}(k(p_1-a_qp_2))}{2}\right)=r_1 \label{dotp_1_2ndStep}
\end{align}
and
\begin{align}
	g &= p_2\left(r_2-z\left(\frac{1-\text{tanh}(k(p_1-a_qp_2))}{2}\right)\right)=0 \nonumber \\
&\Rightarrow z\left(\frac{1-\text{tanh}(k(p_1-a_qp_2))}{2}\right)=r_2\,, \label{dotp_2_2ndStep}
\end{align}
so
\begin{align}
	l = (eq_1p_1-m)r_1+(eq_2p_2-m)r_2=0 \label{hEqualToZeroSubstituted}\,.
\end{align}

We obtain the steady-state solution $(\tilde{p}_1,\tilde{p}_2,\tilde{z})$, where
\begin{align}
	\tilde{z} &=r_1+r_2 \nonumber \\
	(eq_1\tilde{p}_1-m)r_1+(eq_2\tilde{p}_2-m)r_2&=0 \label{steadystateSmoothSys1}\\
\text{tanh}(k(\tilde{p}_1-a_q\tilde{p}_2))&=\frac{r_1-r_2}{r_1+r_2}\,. \nonumber
\end{align}
Taking the inverse hyperbolic tangent on both sides of the third equation in \eqref{steadystateSmoothSys1} results in linearly independent equations for $\tilde{p}_1$ and $\tilde{p}_2$. We thereby obtain a unique steady state at 
\begin{align}
 	\tilde{p}_1&=\frac{a_qm(r_1+r_2)+\frac{eq_2r_2\text{arctanh}\left(\frac{r_1-r_2}{r_1+r_2}\right)}{k}}{e(q_1a_qr_1+q_2r_2)} \,,\nonumber\\
 	\tilde{p}_2&=\frac{m(r_1+r_2)-\frac{eq_1r_1\text{arctanh}\left(\frac{r_1-r_2}{r_1+r_2}\right)}{k}}{e(q_1a_qr_1+q_2r_2)}\label{coexistenceStStSmoothSysI} \,, \\
 	\tilde{z}&=r_1+r_2\nonumber\,.
\end{align}
All three population densities are positive at the steady state $(\tilde{p}_1,\tilde{p}_2,\tilde{z})$ when
\begin{equation}
 	k>k_0=\frac{eq_1r_1\text{arctanh}\left(\frac{r_1-r_2}{r_1+r_2}\right)}{m(r_1+r_2)}\,.
 \label{equationFork0}
\end{equation}
We use the Routh--Hurwitz criterion \cite{routh1877,hurwitz1895} to investigate the stability of the coexistence steady state \eqref{coexistenceStStSmoothSysI}.

\vspace{.5cm}

\begin{proposition}
 \,\,If $a_q \geq q_2/q_1$, then the steady state \eqref{coexistenceStStSmoothSysI} is stable if and only if $k > k_0$.
 \label{propositionSmoothModelI_1}
\end{proposition}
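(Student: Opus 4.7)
The plan is to apply the Routh--Hurwitz criterion to the characteristic polynomial $\lambda^3 + a_1 \lambda^2 + a_2 \lambda + a_3$ of the Jacobian $J$ of \eqref{smooth_1pred2prey} evaluated at \eqref{coexistenceStStSmoothSysI}, and to read off each of the required sign conditions using the steady-state identities.

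First I would write $T = \text{tanh}(k(\tilde{p}_1 - a_q \tilde{p}_2))$ and $S = \text{sech}^2(k(\tilde{p}_1 - a_q \tilde{p}_2))$, and substitute $\tilde{z}(1+T)/2 = r_1$, $\tilde{z}(1-T)/2 = r_2$, and $e q_1 \tilde{p}_1 r_1 + e q_2 \tilde{p}_2 r_2 = m(r_1+r_2)$ (all three implied by \eqref{steadystateSmoothSys1}) into the partial derivatives of $f$, $g$, $l$. After these substitutions the diagonal of $J$ collapses to $(-A,\,-a_q B,\,0)$, where $A = \tilde{p}_1 \tilde{z} k S/2$ and $B = \tilde{p}_2 \tilde{z} k S/2$; the entries $J_{13}$ and $J_{23}$ become $-\tilde{p}_1 r_1/\tilde{z}$ and $-\tilde{p}_2 r_2/\tilde{z}$, respectively; and the bottom row takes the form $(e q_1 r_1 + C,\; e q_2 r_2 - a_q C,\; 0)$ with $C = (e k S \tilde{z}/2)(q_1 \tilde{p}_1 - q_2 \tilde{p}_2)$. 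Two structural simplifications then do most of the work: the upper-left $2\times2$ principal minor $J_{11}J_{22} - J_{12}J_{21}$ vanishes through the standard Lotka--Volterra cancellation, and $a_3 = -\det J$ factors as a positive constant multiple of $\tilde{p}_1 \tilde{p}_2$.

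From this setup I obtain $a_1 = A + a_q B > 0$ immediately, and $a_3 = e(a_q q_1 r_1 + q_2 r_2)(r_1+r_2)(kS/2)\,\tilde{p}_1 \tilde{p}_2$, whose sign is that of $\tilde{p}_1 \tilde{p}_2$. Because $\tilde{p}_1 > 0$ for all admissible parameters while $\tilde{p}_2$ changes sign precisely at $k = k_0$ by \eqref{equationFork0}, the inequality $a_3 > 0$ is equivalent to $k > k_0$, which already proves the ``only if'' direction: if $k < k_0$ then $a_3 < 0$ and one of the necessary Routh--Hurwitz conditions fails, while if $k = k_0$ then $a_3 = 0$ so $J$ has a zero eigenvalue and the equilibrium is not asymptotically stable. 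To verify $a_2 > 0$, I would invoke $a_q \geq q_2/q_1$ together with the third equation of \eqref{steadystateSmoothSys1} and $r_1 > r_2$: these give $\tilde{p}_1 > a_q \tilde{p}_2$, and hence simultaneously $q_1 \tilde{p}_1 > q_2 \tilde{p}_2$ (so that $C > 0$) and $\tilde{p}_1 r_1 > a_q \tilde{p}_2 r_2$, after which regrouping $a_2 = [\tilde{p}_1 r_1 (e q_1 r_1 + C) + \tilde{p}_2 r_2 (e q_2 r_2 - a_q C)]/\tilde{z}$ makes each summand manifestly positive.

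The main obstacle is the remaining Routh--Hurwitz condition $a_1 a_2 > a_3$. My plan is to expand $a_1 a_2$, collect terms by powers of the common factor $kS$, and verify that the $(kS)^2$ piece cancels exactly against a corresponding contribution inside $a_3$, so that $a_1 a_2 - a_3$ reduces to a sum of terms each positive once the sign constraints $C > 0$, $\tilde{p}_1 r_1 > a_q \tilde{p}_2 r_2$, $a_q q_1 \geq q_2$, and $r_1 > r_2$ are applied. I would separate the contributions from $A$ and from $a_q B$ in $a_1$ to keep the bookkeeping manageable, and expect the surviving terms to factor as a positive multiple of $(\tilde{p}_1 r_1 - a_q \tilde{p}_2 r_2)$ plus a baseline Lotka--Volterra quadratic in $r_1, r_2$. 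This algebraic identity is where the hypothesis $a_q \geq q_2/q_1$ is genuinely required beyond its role in establishing $C > 0$, and pinning it down is the crux of the argument.
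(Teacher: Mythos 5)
Your setup is sound and matches the paper's route: you form the Jacobian at \eqref{coexistenceStStSmoothSysI}, reduce it with the steady-state identities (your simplified entries, the vanishing $2\times 2$ minor, and the factorization $a_3 \propto \tilde p_1\tilde p_2$ all check out against the paper's Equation \eqref{JacobianoftheSmoothSys1_reducedOnce} and the coefficients in \eqref{eqn:coefficients}), and you apply Routh--Hurwitz. Your handling of the ``only if'' direction via the sign change of $\tilde p_2$ at $k=k_0$ is in fact more explicit than the paper, which buries this in the phrase ``positivity of the system parameters.'' (Your check that $a_2>0$ is harmless but unnecessary --- for a cubic, $a_1>0$, $a_3>0$, $a_1a_2>a_3$ already suffice --- and your particular grouping $\tilde p_2 r_2(eq_2r_2 - a_qC)$ is not termwise positive for large $k$; the grouping that works is $e(q_1\tilde p_1 r_1^2+q_2\tilde p_2 r_2^2)+C(\tilde p_1 r_1-a_q\tilde p_2 r_2)$.)

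The genuine gap is the condition $a_1a_2>a_3$, which you correctly identify as the crux but leave unproven, and your proposed mechanism for it is wrong. You expect the $(kS)^2$ piece of $a_1a_2$ to ``cancel exactly against a corresponding contribution inside $a_3$.'' It cannot: $a_3$ is only linear in $kS$, whereas the quadratic-in-$k$ part of $a_1a_2$ survives; with $\tilde p_1,\tilde p_2$ treated symbolically it equals a positive multiple of $(\tilde p_1+a_q\tilde p_2)(q_1\tilde p_1-q_2\tilde p_2)(\tilde p_1 r_1-a_q\tilde p_2 r_2)k^2$, and after substituting the $k$-dependence of $\tilde p_1,\tilde p_2$ its leading coefficient is proportional to $a_qq_1-q_2$ (the paper's $s_2$). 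That surviving term is precisely where the hypothesis $a_q\ge q_2/q_1$ enters and precisely what separates Proposition \ref{propositionSmoothModelI_1} from Proposition \ref{propositionSmoothModelI_2}: for $a_q<q_2/q_1$ the same quantity is a downward-opening parabola in $k$ and stability is lost at a finite $k_1$. The paper's actual argument factors $a_1a_2-a_3$ as a positive multiple of $(\tilde p_1 r_1-a_q\tilde p_2 r_2)\,s(k)$ with $s$ an explicit quadratic, then shows $s(k_0)>0$, $s'(k_0)>0$, and $s_2\ge 0$ under the hypothesis, so that $s>0$ on $(k_0,\infty)$. Without carrying out some version of this computation --- and with the cancellation you anticipate being false --- the proof is incomplete at its decisive step.
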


\begin{proof}
See Appendix \ref{appendixStabilityI}.
\end{proof}

\vspace{.5cm}

\begin{proposition}
\,\,If $a_q < q_2/q_1$, then there exists $k_1 \in (k_0,\infty)$ so that the steady state \eqref{coexistenceStStSmoothSysI} is stable if and only if $k \in (k_0, k_1)$.
\label{propositionSmoothModelI_2}
  \end{proposition}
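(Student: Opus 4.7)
My plan is to push through the Routh--Hurwitz analysis that was already used for Proposition \ref{propositionSmoothModelI_1}, but now track the dependence on $k$ carefully over the full range $(k_0, \infty)$ and locate a sign change. Let $T = \tanh(k(\tilde p_1 - a_q \tilde p_2))$ and $S = \operatorname{sech}^2(k(\tilde p_1 - a_q \tilde p_2))$. From \eqref{dotp_1_2ndStep}--\eqref{dotp_2_2ndStep}, $T = (r_1 - r_2)/(r_1 + r_2)$ and hence $S = 4 r_1 r_2/(r_1 + r_2)^2$; crucially these are independent of $k$. This means every entry of the Jacobian $J(\tilde p_1, \tilde p_2, \tilde z)$ is either independent of $k$ or depends on $k$ only through the linear factor $kS$ coming from differentiating $\tanh$, together with the explicit $1/k$ terms sitting inside $\tilde p_1$ and $\tilde p_2$ via \eqref{coexistenceStStSmoothSysI}.

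The first task is to write the characteristic polynomial $\lambda^3 + a_1(k)\lambda^2 + a_2(k)\lambda + a_3(k) = 0$ and identify which of the Routh--Hurwitz inequalities $a_1 > 0$, $a_3 > 0$, $a_1 a_2 - a_3 > 0$ can change sign when $a_q < q_2/q_1$. I expect that $a_1 > 0$ holds for every $k > k_0$ (it is essentially the trace of the positive-entry diagonal contributions from the predator equation, which was already handled in Proposition \ref{propositionSmoothModelI_1}). The sign of $a_3$, which equals $-\det J$ up to sign, and of $a_1 a_2 - a_3$ should be the interesting ones; in Proposition \ref{propositionSmoothModelI_1} the hypothesis $a_q \geq q_2/q_1$ enters precisely to make these two quantities positive for all $k > k_0$, so the opposite inequality is what can destroy positivity.

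Once the polynomial coefficients are in hand, I would argue by a continuity/limit sandwich. Near $k = k_0^+$, the $1/k$ correction in $\tilde p_2$ makes $\tilde p_2$ arbitrarily small, so the Jacobian limits to the boundary case where $\tilde p_2 = 0$, and the analysis of Proposition \ref{propositionSmoothModelI_1} (or a direct substitution) gives stability by continuity. In the opposite limit $k \to \infty$, the $1/k$ corrections vanish, the steady state approaches the pseudo-equilibrium $(\tilde p_1^\infty, \tilde p_2^\infty, r_1+r_2)$ of the piecewise-smooth system \eqref{1pred2prey}, while the off-diagonal entries carrying the factor $kS$ grow linearly in $k$. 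A careful expansion then shows that, when $a_q < q_2/q_1$, the relevant Routh--Hurwitz expression (either $a_3$ or $a_1 a_2 - a_3$) is dominated by a term proportional to $k$ with a negative coefficient and therefore becomes negative for all sufficiently large $k$. Intermediate value then produces some $k_1 \in (k_0, \infty)$ at which the steady state loses stability.

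The main obstacle is proving that this $k_1$ is \emph{unique}, so that stability holds exactly on $(k_0, k_1)$ and nowhere else. To do this I would clear denominators and show that the critical quantity (the one shown above to change sign) is, after multiplication by a positive function of $k$, a polynomial in $k$ of low degree whose coefficients can be read off and shown to admit a single positive real root in $(k_0, \infty)$ under the standing hypothesis $a_q < q_2/q_1$; alternatively, I would differentiate this quantity with respect to $k$ and show monotonicity on $(k_0, \infty)$ using the sign hypothesis. This algebraic bookkeeping is tedious, so I would relegate it to the stability appendix alongside the proof of Proposition \ref{propositionSmoothModelI_1} and present only the resulting characterization of $k_1$ in the main text.
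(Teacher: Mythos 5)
Your plan follows essentially the same route as the paper's proof: Routh--Hurwitz on the cubic characteristic polynomial $\lambda^3+a\lambda^2+b\lambda+c$, with $a>0$ and $c>0$ holding automatically by positivity of the parameters, so that $ab-c$ is the only condition that can fail. The difficulty is that all of the actual content lies in the step you defer as ``tedious algebraic bookkeeping.'' The paper resolves it by showing that $ab-c$ equals a manifestly positive function of $k$ times a genuine quadratic $s(k)=s_2k^2+s_1k+s_0$ whose leading coefficient $s_2=4a_qm^2(a_qq_1-q_2)r_1r_2(r_1+r_2)$ has the sign of $a_qq_1-q_2$, together with the explicit evaluation $s(k_0)>0$. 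Under $a_q<q_2/q_1$ the parabola opens downward and is positive at $k_0$, so it is positive precisely on an interval with right endpoint $k_1=\bigl(-s_1+\sqrt{s_1^2-4s_2s_0}\bigr)/(2s_2)$ as in Equation~\eqref{expressionOfK1}; this one observation delivers existence, uniqueness, and the full ``if and only if'' simultaneously, so the limit-plus-intermediate-value scaffolding you build around it is not needed once the factorization is in hand. Two smaller corrections: first, the destabilizing term for large $k$ is quadratic in $k$ (through $s_2k^2$), not linear as you state, though the conclusion that $ab-c<0$ for large $k$ is unaffected; second, you cannot borrow stability near $k=k_0^+$ from the analysis of Proposition~\ref{propositionSmoothModelI_1}, whose hypothesis is the opposite inequality $a_q\geq q_2/q_1$ --- what actually works there is the direct computation showing $s(k_0)>0$, which holds for all admissible parameter values regardless of the sign of $a_qq_1-q_2$. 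With the quadratic structure of $s(k)$ established, your proposal coincides with the paper's argument.
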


\begin{proof}
See Appendix \ref{appendixStabilityI}.
\end{proof}

\vspace{.5cm}

From these results, we see that when $a_q$ is large, which corresponds to a predator with a steep tradeoff in its prey preference (i.e., a small increase in specialization towards the preferred prey comes at a large cost in growth from feeding on the alternative prey), the coexistence steady state \eqref{coexistenceStStSmoothSysI} is stable for all $k>k_0$. When the prey switching is steep (i.e., when $k \rightarrow \infty$), the coexistence steady state \eqref{coexistenceStStSmoothSysI} is equal to the steady state of the piecewise-smooth system that lies on the switching manifold (i.e., it is a \emph{pseudoequilibrium}), and it has a complex-conjugate pair of eigenvalues with negative real part when $a_q>q_2/q_1$ \cite{OurPaper}. However, in contrast to the piecewise-smooth system, in which the coexistence steady state is repelling for shallow or flat prey preference tradeoffs (i.e., when $a_q<q_2/q_1$), the smooth system \eqref{smooth_1pred2prey} has an interval of intermediate prey-switching slopes $k \in (k_0,k_1)$ (see Equation~\eqref{expressionOfK1} for the expression for $k_1$) for which the coexistence state is also stable for $a_q<q_2/q_1$. 

For population densities at the stable coexistence steady state, smooth model I predicts that the predator density is determined solely by the prey growth rates, so it is affected neither by the slope of the tradeoff nor by the steepness of the diet switch (see Equation \eqref{coexistenceStStSmoothSysI}). For a nearly flat tradeoff (i.e., when $a_q$ is small), the stable coexistence steady-state solution for the preferred prey $p_1$ is at its minimum (when other parameter values are as given in the caption of Figure \ref{coexistenceStStSmoothSysI_numPreyDensities}). However, if in addition to a mild tradeoff, the predator's prey switching is also gradual (i.e., $k$ is very small), then $p_1$ has a high concentration at the stable equilibrium (see the left panel of Figure \ref{coexistenceStStSmoothSysI_numPreyDensities}). This pattern of minimum and maximum values is reversed for the alternative prey: The steady-state concentration of the alternative prey $p_2$ has a large value when $a_q$ is small, except for very small $k$, when the steady-state value of $p_2$ is small (see the right panel of Figure \ref{coexistenceStStSmoothSysI_numPreyDensities}). Such behavior of the steady state \eqref{steadystateSmoothSys1} suggests that $k\rightarrow 0$ is a singular limit of smooth system I \eqref{smooth_1pred2prey}.

\begin{figure}[!] 
\centering
\epsfig{file=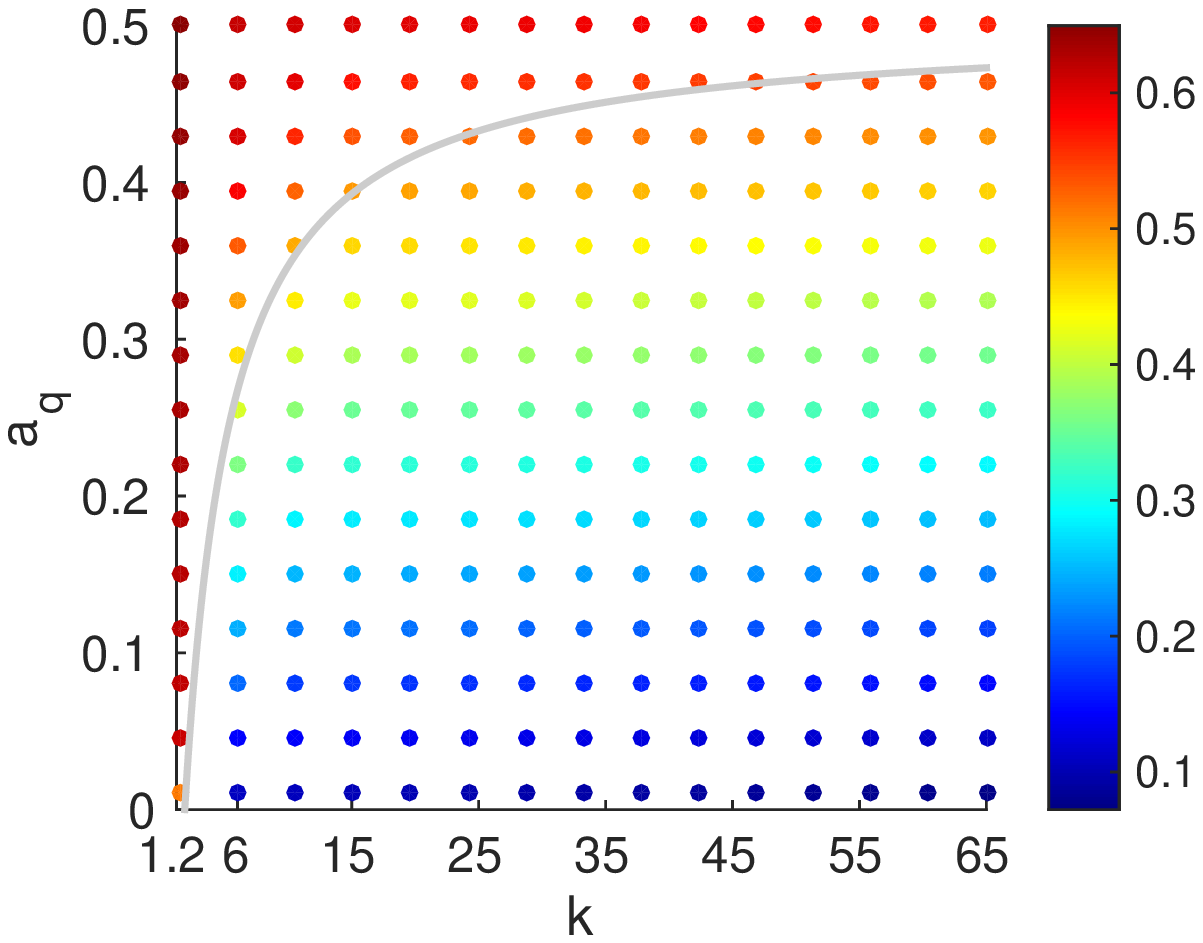,width=0.49\textwidth} 
\epsfig{file=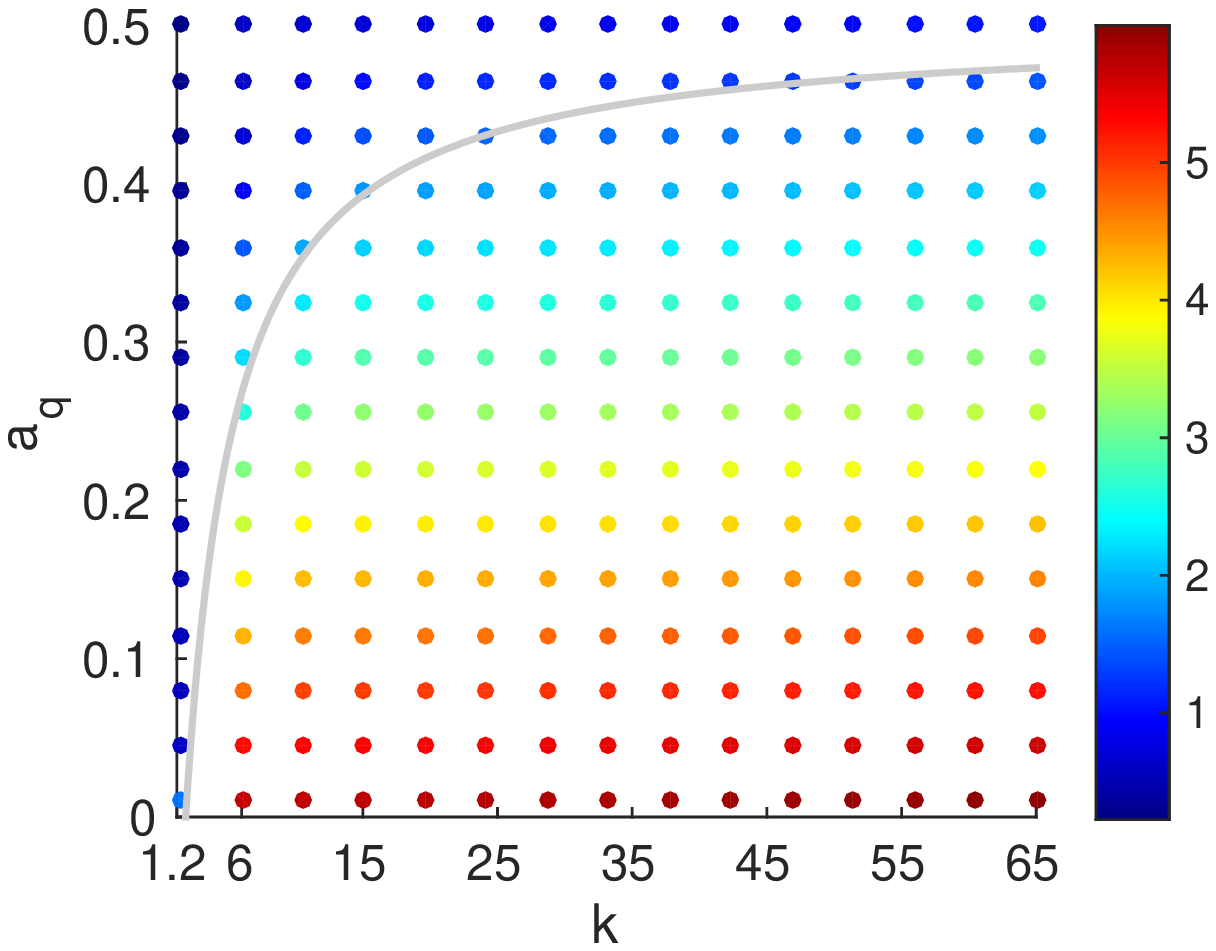,width=0.49\textwidth} 
\caption{Numerical computations for prey population densities at equilibrium in Equation \eqref{coexistenceStStSmoothSysI} for
parameter values $(e,\beta_1,\beta_2,r_1,r_2,m,q_1,q_2)=(0.25,1,1,1.3,0.26,0.14,1,0.5)$ [where we determine the values of $r_1$ ,$r_2$, and $m$ from our parameter fitting of the piecewise-smooth model in \cite{OurPaper}] of (left) the preferred prey $p_1$ and (right) the alternative prey $p_2$ at the indicated values of the slope of the preference tradeoff $a_q$ (vertical axis) and steepness of the predator switching $k$ (horizontal axis). The predator population density at equilibrium is $\tilde{z}=r_1+r_2 \approx 1.56$. We indicate the value of the prey density at equilibrium in color and numerically compute the steady-state solution of smooth system I \eqref{coexistenceStStSmoothSysI}. The steady state is stable above the gray curve. [See the equation for $k_1$ in Equation \eqref{expressionOfK1}.] With these parameter values, $k_0$ in Equation \eqref{equationFork0} is approximately $1.197$.} 
\label{coexistenceStStSmoothSysI_numPreyDensities} 
\end{figure}


\subsection{Smooth model II}\label{subsection_smoothAnalogII}

To regularize the three-dimensional piecewise-smooth system \eqref{1pred2prey} into a four-dimensional smooth system, we construct expressions for the temporal evolution of the predator's trait to accompany the population dynamics of the predator and two prey. Biologically, we are assuming that the predator's desire to consume its preferred prey undergoes either \emph{rapid evolution} \cite{Fussman2007} or \emph{phenotypic plasticity} \cite{Kellyetal_phenplast2012}, which are the two main forms of adaptivity in organisms. We will comment on these model assumptions in Section \ref{section_discussion}. We thereby turn the parameter $q_1$ that abruptly changes across the discontinuity in the piecewise-smooth model \eqref{1pred2prey} (i.e., $q_1=1$ when $h>0$ and $q_1=0$ when $h<0$) into a system variable $q$ that changes in response to prey abundance on the same time scale as the population dynamics in a smooth dynamical system. 

To ensure similarity with the piecewise-smooth model \eqref{1pred2prey}, we assume that no preference towards the preferred prey amounts to a feeding mode of consuming only the alternative prey (i.e., $q=0$) and that maximum preference towards the preferred prey amounts to a feeding mode of consuming only the preferred prey (i.e., $q=1$). We incorporate this assumption with a bounding function $q(1-q)$ in the expression for the temporal evolution of the predator's trait. From the condition for prey switching derived using optimal foraging theory \cite{Stephens1986} in \cite{OurPaper}, we impose that the rate of change of the mean trait value is proportional to $p_1-a_qp_2$. That is, we assume that the predator's choice to switch prey depends on prey abundances and which diet composition maximizes its rate of energy intake \cite{Stephens1986}. For simplicity, we also assume exponential prey growth and linear functional response as in the piecewise-smooth system \eqref{1pred2prey} \cite{OurPaper}. We thereby obtain the following dynamical system for the population dynamics coupled with temporal evolution of the predator trait: 
\begin{align}
	\frac{\text{d} p_1}{\text{d} t}&=g_1(p_1,p_2,z,q)=r_1p_1-qp_1z \nonumber \,,\\
	\frac{\text{d} p_2}{\text{d} t}&=g_2(p_1,p_2,z,q)=r_2p_2-(1-q)p_2z  \label{smoothSys2FullSystem} \,,\\ \nonumber
	\frac{\text{d} z}{\text{d} t}&=g_3(p_1,p_2,z,q)=eqp_1z+e(1-q)q_2p_2z-mz \nonumber \,,\\
	\frac{\text{d} q}{\text{d} t}&=f(p_1,p_2,q)=q(1-q)(p_1-a_qp_2)\,. \nonumber
\end{align} 
Similar to the piecewise-smooth system \eqref{1pred2prey} and to smooth system I \eqref{smooth_1pred2prey}, the predator--prey interaction in \eqref{smoothSys2FullSystem} (which we call ``smooth model II'') is of standard Lotka--Volterra type, so the benefit of consuming prey is proportional to the amount of predation. Consequently, the proportion of predation that goes into predator growth is given by $eq$ for the preferred prey and by $e(1-q)q_2$ for the alternative prey, where $e \in (0,1)$ is a parameter that represents conversion efficiency. For $q_1=1$ in the piecewise-smooth system \eqref{1pred2prey}, smooth model II \eqref{smoothSys2FullSystem} reduces to $f_+$ in \eqref{1pred2prey} when $q=1$ and to $f_-$ in \eqref{1pred2prey} when $q=0$. Biologically, these two cases correspond, respectively, to the situations in which the predator's diet is composed solely of the preferred prey and alternative prey. Note that the model in (\ref{smoothSys2FullSystem}) does not include a time-scale difference, which was incorporated between demographic and trait (i.e., $q_1$) dynamics of the four-dimensional smooth system, and analyzed using singular perturbation theory in \cite{ourfastslowPaper}.  


\subsection{Linear stability analysis of smooth model II}\label{analysis_of_thesmoothSys2} 

The population densities of the predator and two prey at the coexistence steady state in smooth system II \eqref{smoothSys2FullSystem} are
\begin{align}
 	\tilde{p}_1 &=\frac{a_qm(r_1+r_2)}{e(r_1a_q+r_2q_2)} \,,\nonumber\\
 	\tilde{p}_2 &=\frac{m(r_1+r_2)}{e(r_1a_q+r_2q_2)}\,, \nonumber\\
 	\tilde{z} &=r_1+r_2 \,, \label{coexistStStSSII}\\
 	\tilde{q} &=\frac{r_1}{r_1+r_2}\,.\nonumber
\end{align}
These are the same densities that occur both at the steady state of smooth system I \eqref{coexistenceStStSmoothSysI} with steep prey switching (i.e., when $k \rightarrow \infty$) and at the pseudoequilibrium point of the piecewise-smooth system \eqref{1pred2prey} (for $q_1=1$ and $q_2=0.5$) that is located on the discontinuity boundary of the piecewise-smooth 1 predator--2 prey model \cite{OurPaper}. 

We summarize the result of linear stability analysis of smooth system II \eqref{smoothSys2FullSystem} in the two following propositions.

\vspace{.5 cm}

\begin{proposition}
 \,\,If $a_q = q_2$, then all eigenvalues of the steady state are purely imaginary.
\label{propositionSmoothModelII_1}
\end{proposition}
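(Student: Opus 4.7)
The plan is to form the Jacobian of \eqref{smoothSys2FullSystem} at \eqref{coexistStStSSII}, observe that the condition $a_q = q_2$ forces a block anti-diagonal structure whose characteristic polynomial is biquadratic, and then check that both roots of that biquadratic lie on the negative real axis.

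First I would compute the Jacobian entry by entry at $(\tilde p_1,\tilde p_2,\tilde z,\tilde q)$. Several entries vanish at every coexistence equilibrium: $\partial_{p_1}g_1 = r_1-\tilde q\tilde z = 0$ and $\partial_{p_2}g_2 = r_2-(1-\tilde q)\tilde z = 0$ by $\tilde z = r_1+r_2$ and $\tilde q = r_1/(r_1+r_2)$; the cross-derivatives $\partial_{p_2}g_1$ and $\partial_{p_1}g_2$ vanish identically from the form of \eqref{smoothSys2FullSystem}; $\partial_z g_3 = e\tilde q\tilde p_1+e(1-\tilde q)q_2\tilde p_2-m = 0$ by the defining relation of $\tilde z$; and $\partial_z f = 0$, $\partial_q f = (1-2\tilde q)(\tilde p_1-a_q\tilde p_2) = 0$ since $\tilde p_1 = a_q\tilde p_2$ at any coexistence equilibrium. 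The key observation is that the hypothesis $a_q = q_2$ additionally forces $\partial_q g_3 = e\tilde z(\tilde p_1-q_2\tilde p_2)$ to vanish, because then $\tilde p_1 = a_q\tilde p_2 = q_2\tilde p_2$. Thus both $2\times 2$ diagonal blocks of the Jacobian are zero.

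The Jacobian takes the block anti-diagonal form $J = \begin{pmatrix} 0 & M_1 \\ M_2 & 0\end{pmatrix}$ with $M_1,M_2\in\mathbb R^{2\times 2}$ read off from the surviving entries, so
\[
\det(\lambda I - J) \;=\; \det(\lambda^2 I - M_1 M_2) \;=\; \lambda^4 - T\lambda^2 + D\,,
\]
where $T = \operatorname{tr}(M_1M_2)$ and $D = \det(M_1M_2)$; hence the four eigenvalues are $\pm\sqrt{\mu_{1,2}}$, where $\mu_{1,2}$ are the eigenvalues of the $2\times 2$ matrix $M_1M_2$. A direct substitution of \eqref{coexistStStSSII} (using $a_q=q_2$) produces the compact expressions $T = -\tfrac{m\,(e(r_1^2+r_2^2)+2r_1r_2)}{e(r_1+r_2)} < 0$ and $D = \tfrac{r_1 r_2 m^2}{e} > 0$. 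In particular $D > 0$ rules out the exceptional value $\lambda = 0$.

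The main obstacle is then the discriminant inequality $T^2-4D\geq 0$, which is what is needed for $\mu_{1,2}$ to be real; once that holds, $\mu_1+\mu_2 = T<0$ and $\mu_1\mu_2 = D>0$ force both roots to be strictly negative by Vieta, and $\lambda = \pm i\sqrt{|\mu_{1,2}|}$ is purely imaginary. Expanding $T^2-4D\geq 0$ reduces it to
\[
e^2(r_1^2+r_2^2)^2 + 4r_1^2r_2^2 \;\geq\; 8e\,r_1^2 r_2^2\,.
\]
Using the elementary bound $(r_1^2+r_2^2)^2\geq 4r_1^2 r_2^2$, the left-hand side exceeds the right-hand side by at least $4r_1^2r_2^2(e-1)^2\geq 0$. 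This non-obvious algebraic reduction to $(e-1)^2\geq 0$ is the one delicate step; once it is established, all four eigenvalues of $J$ are purely imaginary, completing the proof.
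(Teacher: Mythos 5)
Your proof is correct, and it reaches the same destination as the paper's --- a biquadratic characteristic polynomial $\lambda^4 - T\lambda^2 + D$ whose associated quadratic in $u=\lambda^2$ has two negative real roots --- but it gets there by a different mechanism and closes the discriminant step differently. The paper computes the full quartic characteristic polynomial of the Jacobian \eqref{smoothSysIIJacobianreduced} for general $a_q$ and simply observes that the $\mathcal{O}(\lambda)$ coefficient carries a factor $(a_q-q_2)$ and hence vanishes under the hypothesis; you instead explain \emph{why} the odd-degree terms drop out, by noting that $a_q=q_2$ kills the single surviving diagonal-block entry $\partial_q g_3 = e\tilde z(\tilde p_1 - q_2\tilde p_2)$ and renders $J$ block anti-diagonal, whence $\det(\lambda I - J)=\det(\lambda^2 I - M_1M_2)$. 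Your values of $T$ and $D$ agree with the paper's coefficients after setting $a_q=q_2$. For the delicate step --- realness of the roots in $u$ --- the paper rewrites the discriminant as a manifestly nonnegative sum, namely $\frac{m^2}{e^2(r_1+r_2)^2}\bigl[(r_1^2+r_2^2)^2\bigl(e-\tfrac{4r_1^2r_2^2}{(r_1^2+r_2^2)^2}\bigr)^2+\tfrac{4r_1^2r_2^2(r_1^2-r_2^2)^2}{(r_1^2+r_2^2)^2}\bigr]$, whereas you reduce the equivalent inequality $e^2(r_1^2+r_2^2)^2+4r_1^2r_2^2\ge 8er_1^2r_2^2$ to $(e-1)^2\ge 0$ via $(r_1^2+r_2^2)^2\ge 4r_1^2r_2^2$; both are valid, and your explicit remark that $D>0$ excludes $\lambda=0$ is a small point the paper leaves implicit. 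What your route buys is structural insight (the block form makes the spectrum $\pm\sqrt{\mu_{1,2}}$ transparent and generalizes); what the paper's route buys is that the same general quartic is reused directly in the proof of Proposition \ref{propositionSmoothModelII_2}.
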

\begin{proof}
See Appendix \ref{appendixStabilityII}.
\end{proof}

\vspace{.5 cm}

\begin{proposition}
\,\,If $a_q \neq q_2$, then the steady state is linearly unstable. 
\label{propositionSmoothModelII_2}
\end{proposition}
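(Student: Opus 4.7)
The plan is to examine the Jacobian $J$ of \eqref{smoothSys2FullSystem} at the coexistence steady state \eqref{coexistStStSSII} and to read enough information off its characteristic polynomial to exhibit an eigenvalue with positive real part. The first step is to observe that every diagonal entry of $J$ vanishes at this steady state: the identities $\tilde q\tilde z=r_1$ and $(1-\tilde q)\tilde z=r_2$ kill $\partial g_1/\partial p_1$ and $\partial g_2/\partial p_2$; the condition $\dot z=0$ forces $\partial g_3/\partial z=e\tilde q\tilde p_1+e(1-\tilde q)q_2\tilde p_2-m=0$; and $\dot q=0$ together with $\tilde q(1-\tilde q)\ne 0$ gives $\tilde p_1=a_q\tilde p_2$, whence $\partial f/\partial q=(1-2\tilde q)(\tilde p_1-a_q\tilde p_2)=0$. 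Consequently $\operatorname{tr}(J)=0$, so the characteristic polynomial takes the form $\lambda^4+a_2\lambda^2+a_3\lambda+a_4$ with vanishing cubic coefficient.

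Next I would compute the linear coefficient $a_3=-c_3$, where $c_3$ is the sum of the four principal $3\times 3$ minors of $J$. Two of these minors are clearly zero: removing the row/column associated with $z$ or with $q$ leaves a $3\times 3$ block whose first two rows have nonzero entries in only one common column, so the determinant vanishes. For each of the two remaining minors (delete the $p_1$- or the $p_2$-row and column) I would cofactor-expand along the row containing $\pm a_q\tilde q(1-\tilde q)$; using $\tilde p_1=a_q\tilde p_2$ together with the steady-state values of $\tilde q$ and $\tilde z$, the two contributions combine to give
\[
c_3 \;=\; e\,a_q\,\tilde z\,(\tilde p_2)^2\,\tilde q(1-\tilde q)(1-2\tilde q)(a_q-q_2).
\]
Because $\tilde q=r_1/(r_1+r_2)$ with $r_1>r_2$, the factor $1-2\tilde q=(r_2-r_1)/(r_1+r_2)$ is strictly nonzero, and the remaining factors $e,\tilde z,a_q,\tilde p_2,\tilde q(1-\tilde q)$ are strictly positive. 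Hence $a_3=-c_3\ne 0$ whenever $a_q\ne q_2$.

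To finish, I use the fact that $\operatorname{tr}(J)=0$ means the four eigenvalues of $J$ sum to zero; since $J$ is real, its eigenvalues are real or occur in complex-conjugate pairs. If all four real parts were zero, the eigenvalues would form pairs $\pm i\omega_j$ with $\omega_j\in\mathbb R$ (possibly $0$), so the characteristic polynomial would factor as $(\lambda^2+\omega_1^2)(\lambda^2+\omega_2^2)$, an even polynomial in $\lambda$, which would force $a_3=0$. The previous paragraph rules this out under the hypothesis $a_q\ne q_2$, so some eigenvalue has nonzero real part; the vanishing trace then forces at least one eigenvalue to have strictly positive real part, establishing linear instability. The only delicate part of the argument is the algebraic collapse producing the clean factor $(a_q-q_2)(1-2\tilde q)$ in $c_3$: the bookkeeping across the two surviving minors is routine but easy to mis-sign, and that is where I would expect most of the opportunity for error.
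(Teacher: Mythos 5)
Your proposal is correct and follows essentially the same route as the paper: both arguments hinge on showing that the coefficient of the linear term in the degree-four characteristic polynomial is a nonzero multiple of $(a_q-q_2)(r_1-r_2)$, which rules out a purely imaginary spectrum, and then invoke $\operatorname{tr}(J)=0$ to force an eigenvalue with strictly positive real part. Your computed $c_3$ agrees with the paper's $\mathcal{O}(\lambda)$ coefficient $a_qr_1r_2m^2(a_q-q_2)(r_1-r_2)/\bigl(e(a_qr_1+q_2r_2)^2\bigr)$ after substituting the steady-state values, so the only differences are cosmetic (principal minors versus writing out the full characteristic polynomial, and the evenness argument versus the paper's purely-imaginary-coefficient contradiction).
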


\begin{proof}
See Appendix \ref{appendixStabilityII}.
\end{proof}

\vspace{.5 cm}

Consequently, smooth system II \eqref{smoothSys2FullSystem} has an unstable coexistence steady state irrespective of whether a predator can be construed as selective with a steep preference tradeoff with respect to its preferred and alternate prey or as unselective with a mild tradeoff in its preference towards the two prey. Our results also imply that our smoothing of the piecewise-smooth system \eqref{1pred2prey} by adding an extra dimension as in Equation \eqref{smoothSys2FullSystem} changes the stability of the coexistence equilibrium.


\section{Results}\label{section_dataComparison}

To obtain insight into the steepness of the prey switching in the two smooth models that we constructed in Section \ref{section_Models}, we consider data from Lake Constance (see \cite{TirokGaedke2006,TirokGaedke2007a}) for ciliate predators and two different types of their algal prey groups. The Lake Constance data set consists of over 23,000 observations of abundances (expressed either as individuals or as cells per milliliter) and biomass (expressed as units of carbon per square meter) of various plankton species obtained at least once in a sample of a few milliliters to a liter of water between March 1979 and December 1999. We compare the abundances predicted by our two smooth models with data from years 1991 and 1998. (For a comparison between the piecewise-smooth model \eqref{1pred2prey} and data, see \cite{OurPaper}.) During these two years, the spring bloom lasted for several weeks, and ciliate and algal biomasses exhibited recurring patterns of increases followed by declines \cite{TirokGaedke2007}. We are interested in spring abundances, because previous studies have suggested that predator--prey feeding interactions are an important factor in explaining the ciliate--algae dynamics in that season \cite{TirokGaedke2010} and that such interactions are more important than environmental conditions during spring \cite{Sommeretal2012}.

M{\"u}ller and Schlegel observed ciliates actively selecting against certain types of prey when offered a mixed diet of different types of their algal prey \cite{MullerSchlegel1999}. They suggested that adaptive feeding in ciliates occurs because different species benefit differently depending on the match between their feeding mode and the species that are abundant in the prey community. That is, ciliates select against their less-edible prey (e.g., a prey type that develops a hard silicate cover as a predator defense mechanism) when offered a mixed diet of both easily-digested and less-edible prey \cite{MullerSchlegel1999}. As a representative of an easily-digested prey group (i.e., preferred prey $p_1$ in the model equations), we consider data for {\em Cryptomonas ovata}, {\em Cryptomonas marssonii}, {\em Cryptomonas reflexa}, {\em Cryptomonas erosa}, {\em Rhodomonas lens}, and {\em Rhodomonas minuta} in the Lake Constance data set. For the less-edible prey (i.e., the alternative prey group $p_2$), we use data for small and medium-sized {\em Chlamydomonas} spp. and {\em Stephanodiscus parvus}. In addition to different prey groups, one can categorize ciliate-predators, which dominate the herbivorous zooplankton community in spring \cite{TirokGaedke2007}, roughly in terms of being more-selective or less-selective predators \cite{Verity1991}. To represent differences in selectivity between different predator species, the unselective filter-feeder predator group consists of data for {\em Rimostrombidum lacustris}, and the selective interception-feeder predator group consists of data for {\em Balanion planctonicum}. 

In this section, we use Lake Constance data on ciliate-predators and their algal prey to infer the steepness $k$ of the prey-switching function in smooth model I \eqref{smooth_1pred2prey}, the perturbation $\nu$ in the predator population from the coexistence steady state in Equation \eqref{coexistStStSSII} of smooth model II \eqref{smoothSys2FullSystem}, prey growth, predator death rates, and other parameters of our models. For our comparison between the Lake Constance data and the two smooth models in Sections \ref{subsection_comparisonwDataSysI} and \ref{subsection_comparisonwDataSysII}, we first normalize both the biweekly data points and the model predictions for the predator density, $z$, by their $L_2$-norm (i.e., by Euclidean distance). We consider the time window from 1 March to 15 June, for which there are 31 data points for the selective predator and 19 data points for the unselective predator in 1991. In 1998, there are 15 data points for both the selective and unselective predator species between 1 March and 15 June. We fit parameters to data with approximate Bayesian computation (ABC) combined with a population Monte Carlo (PMC) method \cite{beaumont2009PMCABC}. This combination allows us to study the results from the posterior parameter distribution rather than just a single value that gives the best fit as a result of an optimization method. Additionally, this approach allows us to code every step of the algorithm on our own. (We implement the algorithm in {\sc{Matlab}} \cite{MATLAB2014}.) We can thereby examine possible sources of error in the fitting process more easily than if providing input and analyzing an output of an available program package for parameter estimation of ODE models. The posterior parameter distribution, which is an output of the fitting algorithm, is especially useful for assessing how well the piecewise-smooth model \eqref{1pred2prey} approximates prey switching, which we represent with a hyperbolic tangent function in smooth model I \eqref{smooth_1pred2prey} and by incorporating an additional system variable in smooth model II \eqref{smoothSys2FullSystem}. 


\subsection{Comparison of smooth model I simulations and Lake Constance data}\label{subsection_comparisonwDataSysI}

In this section, we fit the growth rates ($r_1$ and $r_2$, respectively) of the preferred and alternative prey, the predator mortality rate, $m$, the slope $k$ of the prey-switching function, and the slope $a_q$ of the prey-preference tradeoff of smooth model I \eqref{smooth_1pred2prey}. We use $a_q$ as a bifurcation parameter. (See Propositions \ref{propositionSmoothModelI_1} and \ref{propositionSmoothModelI_2}.) However, for simplicity (and similar to the study of the piecewise-smooth system in \cite{OurPaper}), we assume that the nondimensional preference parameters are fixed (and we take $q_1=1$ and $q_2=0.5$). Thus, given our choice of the preference parameters and using $a_q$ as a bifurcation parameter, we investigate linear preference tradeoffs that all go through point $(q_1,q_2)=(1,0.5)$ but do so with different slopes.

Smooth model I \eqref{smooth_1pred2prey} reproduces the peak abundances in the Lake Constance data and predicts an oscillatory pattern for both the selective and unselective predator populations during the springs of 1991 and 1998 (see Figures \ref{simulationAndDataSmoothSys1_1991} and \ref{simulationAndDataSmoothSys1_1998}). Additionally, our parameter fitting suggests that adaptive feeding of the selective predator is best represented with a steep switching function. In particular, for 1998, we obtain more frequent gradual prey-switching functions for an unselective predator than for a selective one at the smallest tolerance level of the fitting algorithm. See the middle rows of Figures \ref{simulationAndDataSmoothSys1_1991} and \ref{simulationAndDataSmoothSys1_1998}. Note that the coexistence steady state is unstable for the inferred parameter values that we use in the top-right panels in Figures \ref{simulationAndDataSmoothSys1_1991} and \ref{simulationAndDataSmoothSys1_1998}. In these two figures, this steady state is thus unstable for $k>k_1\approx1.3$ and $k>k_1\approx2.9$, respectively.

\begin{figure}[!] 
\centering
\epsfig{file=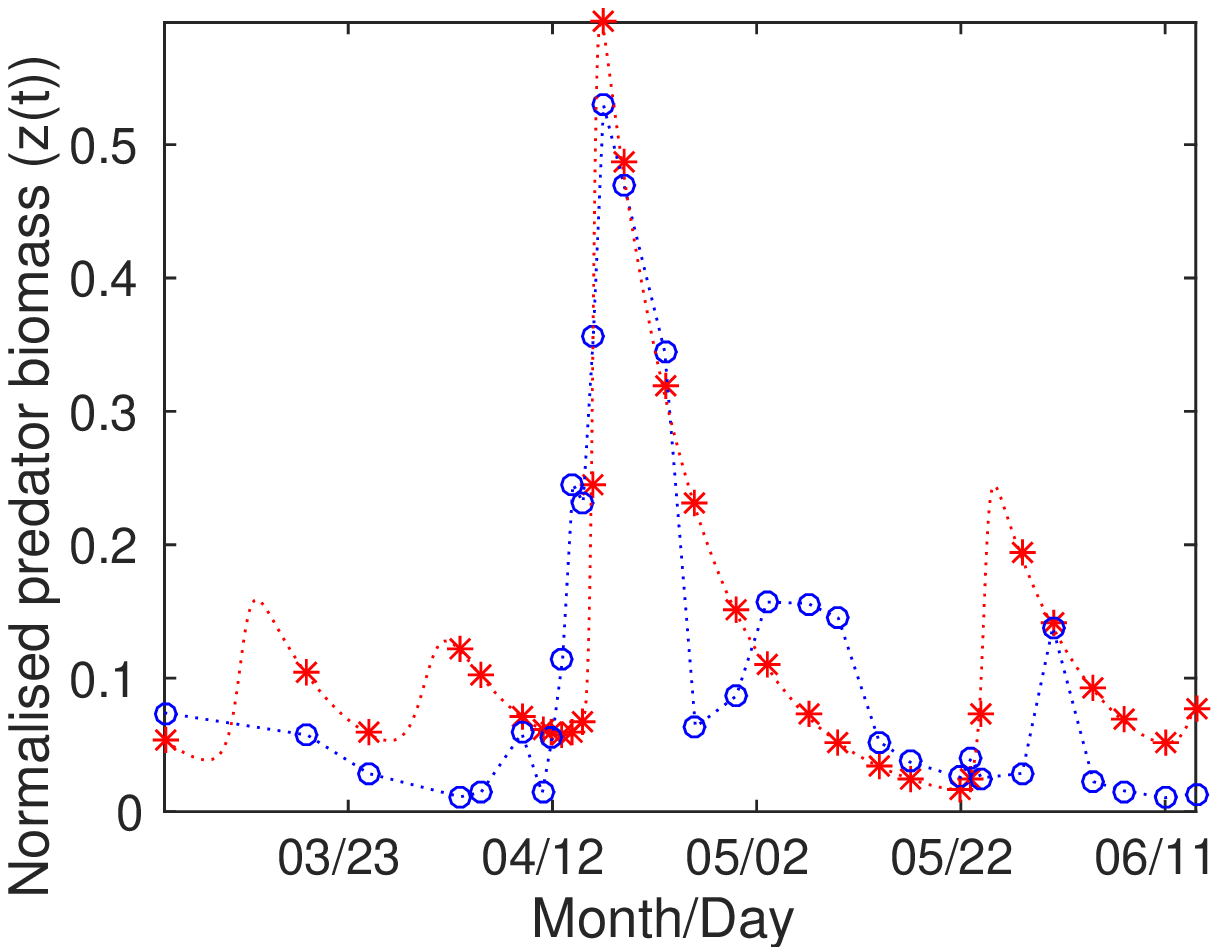,width=0.49\textwidth} 
\epsfig{file=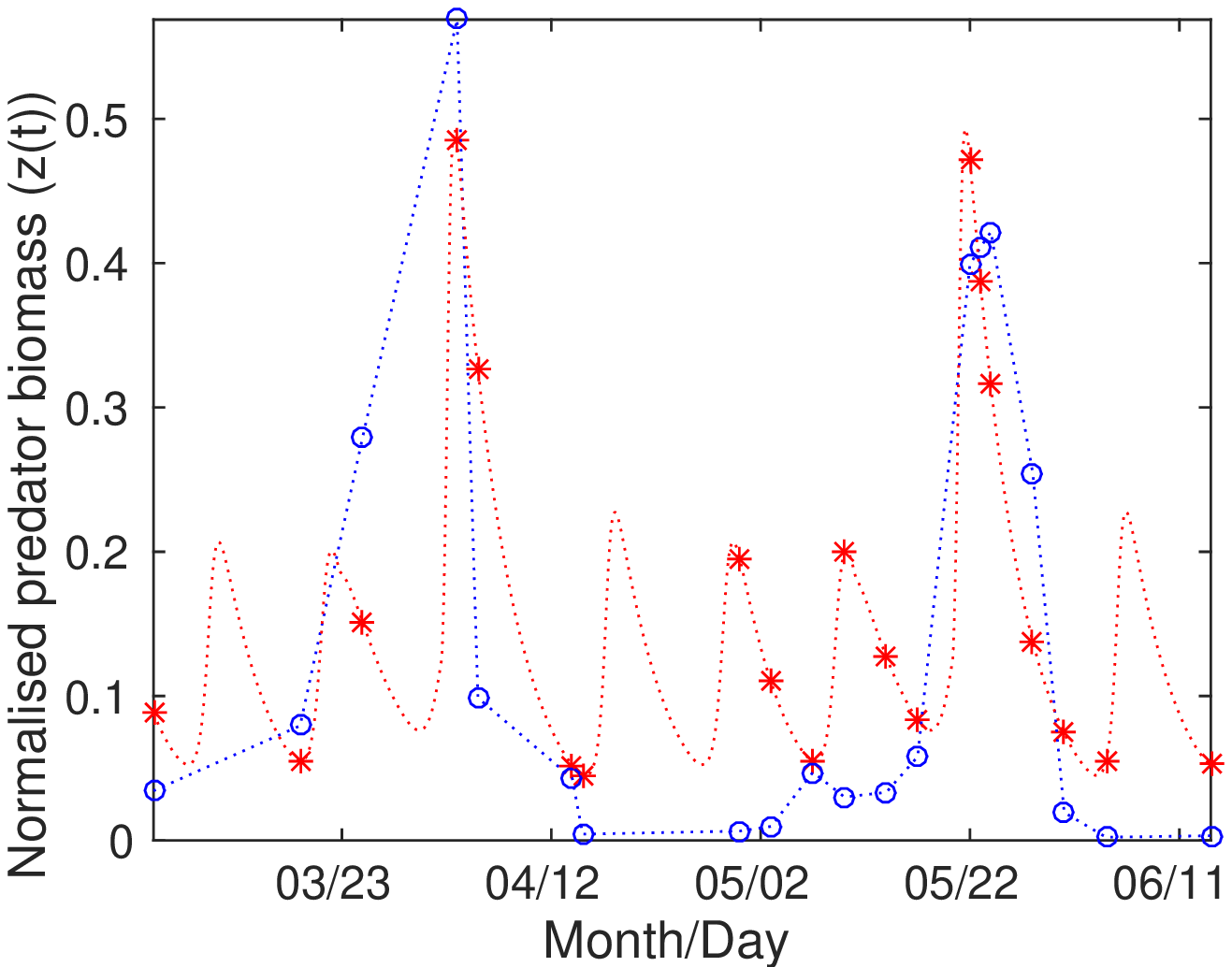,width=0.49\textwidth} 
\epsfig{file=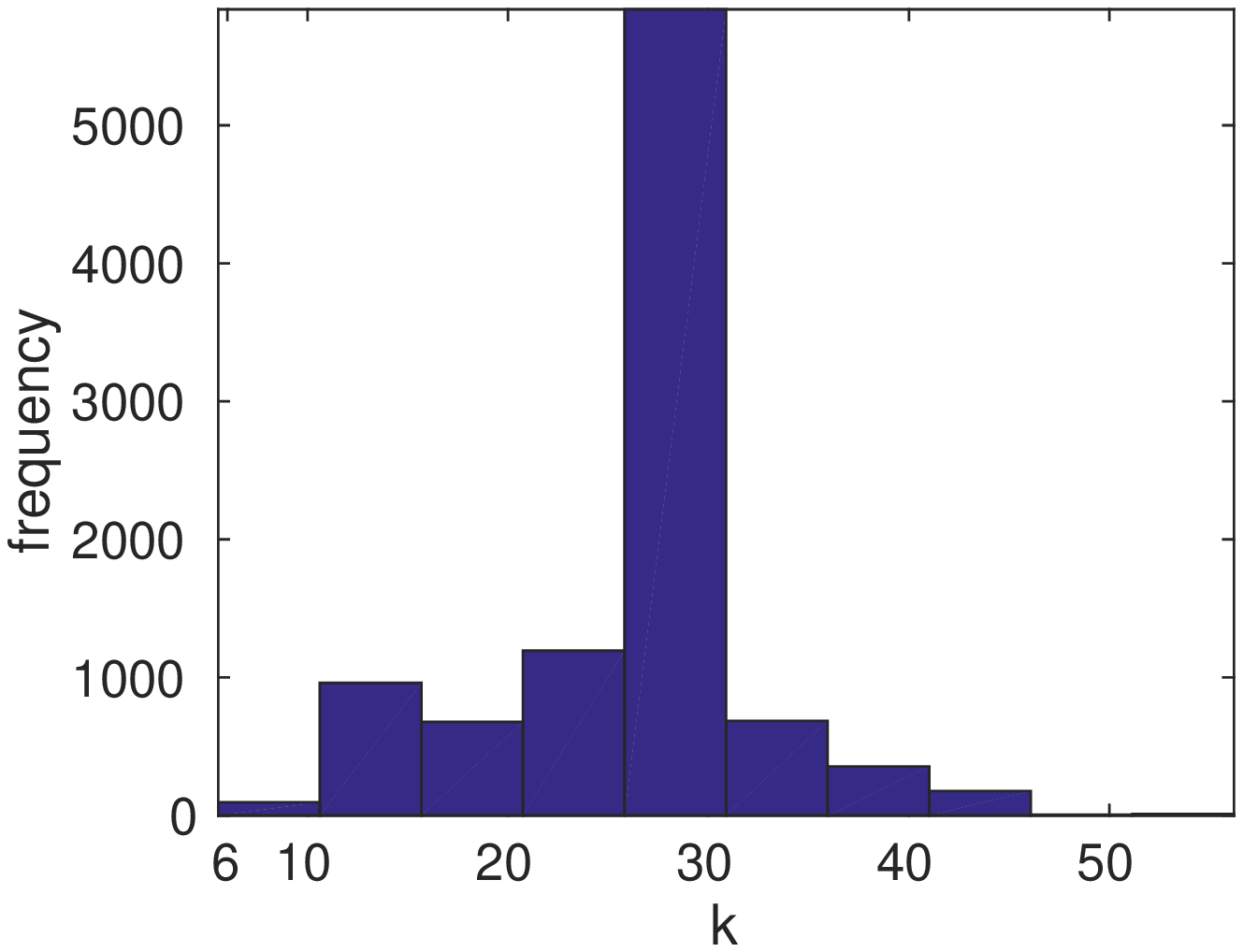,width=0.49\textwidth}
\epsfig{file=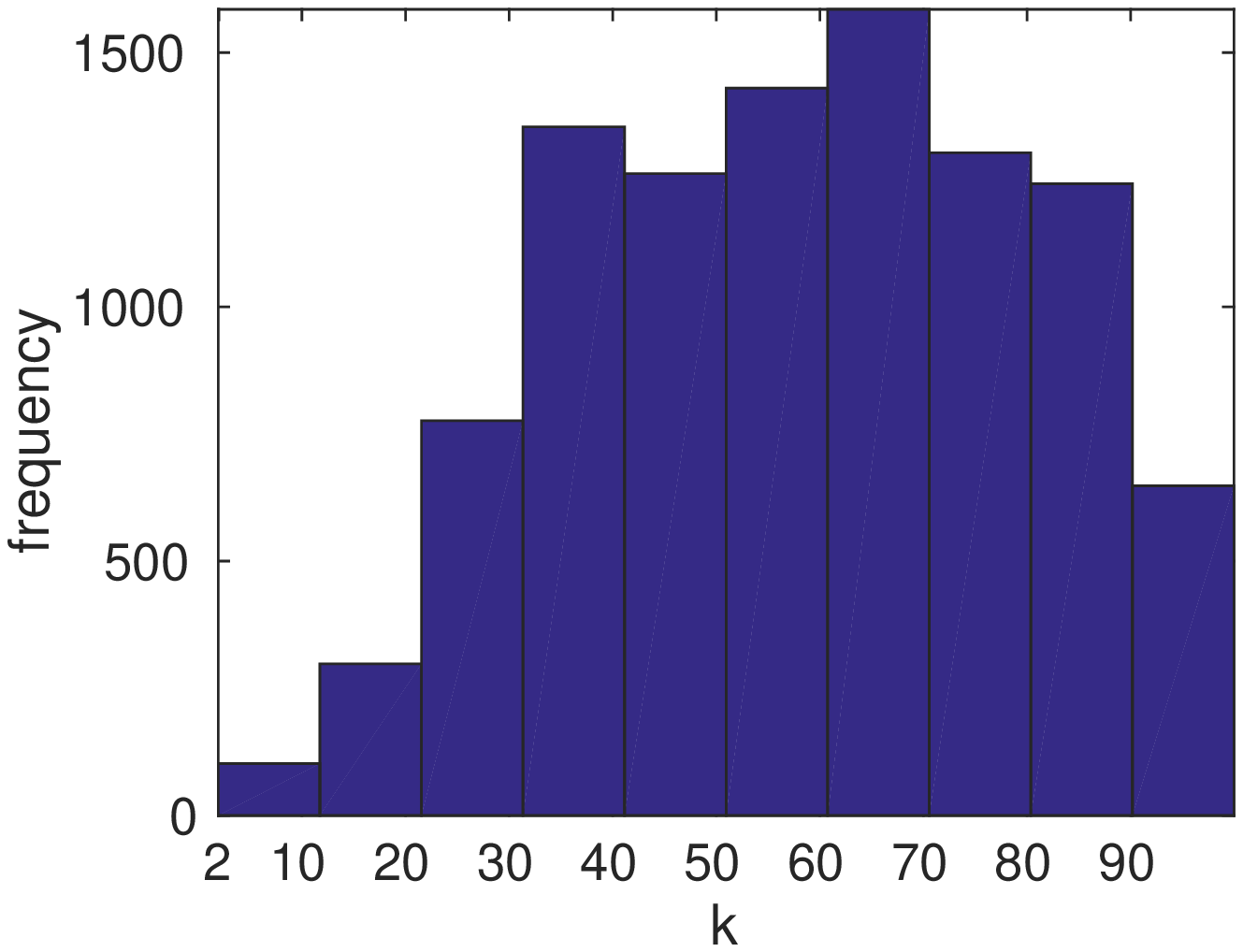,width=0.49\textwidth} 
\epsfig{file=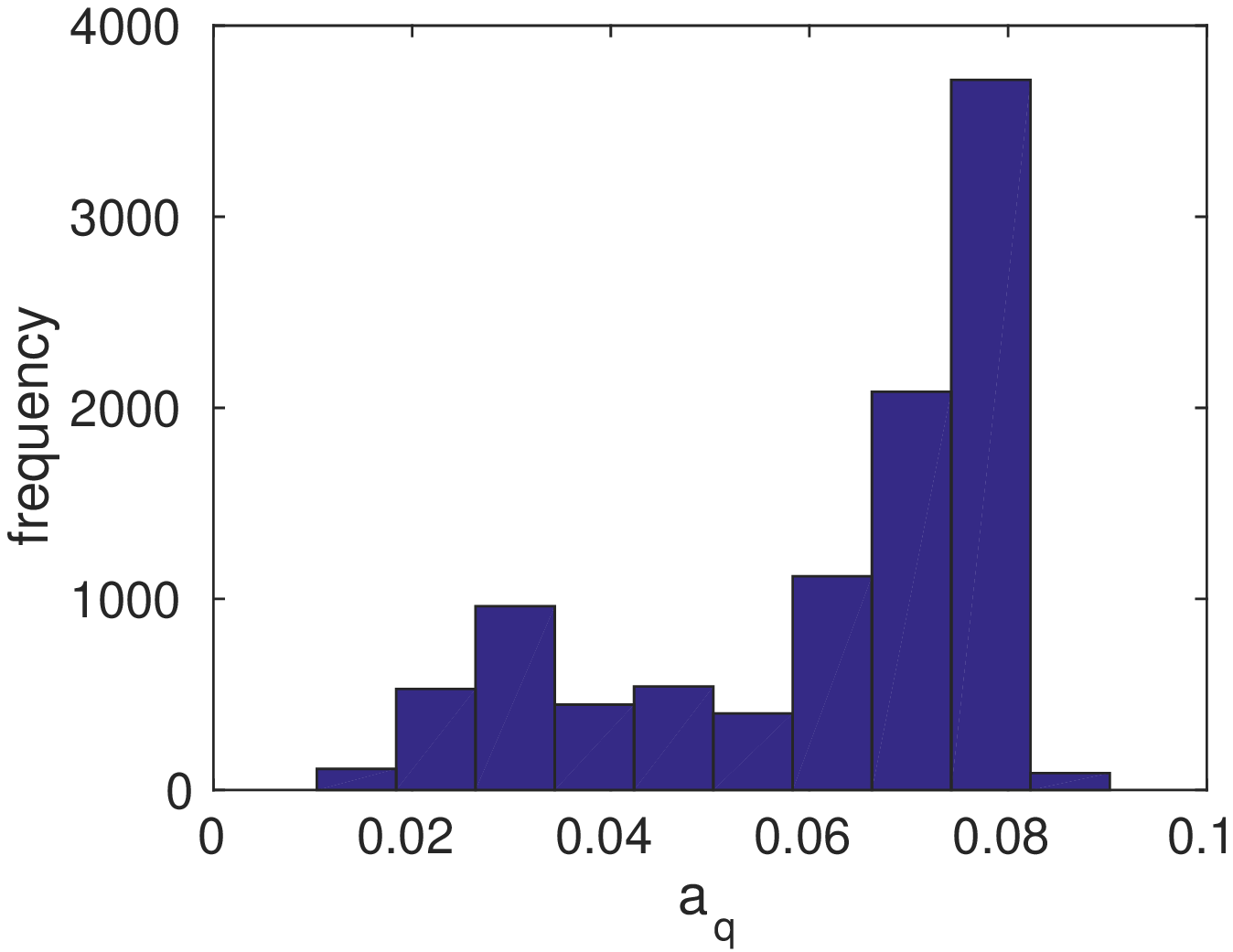,width=0.49\textwidth} 
\epsfig{file=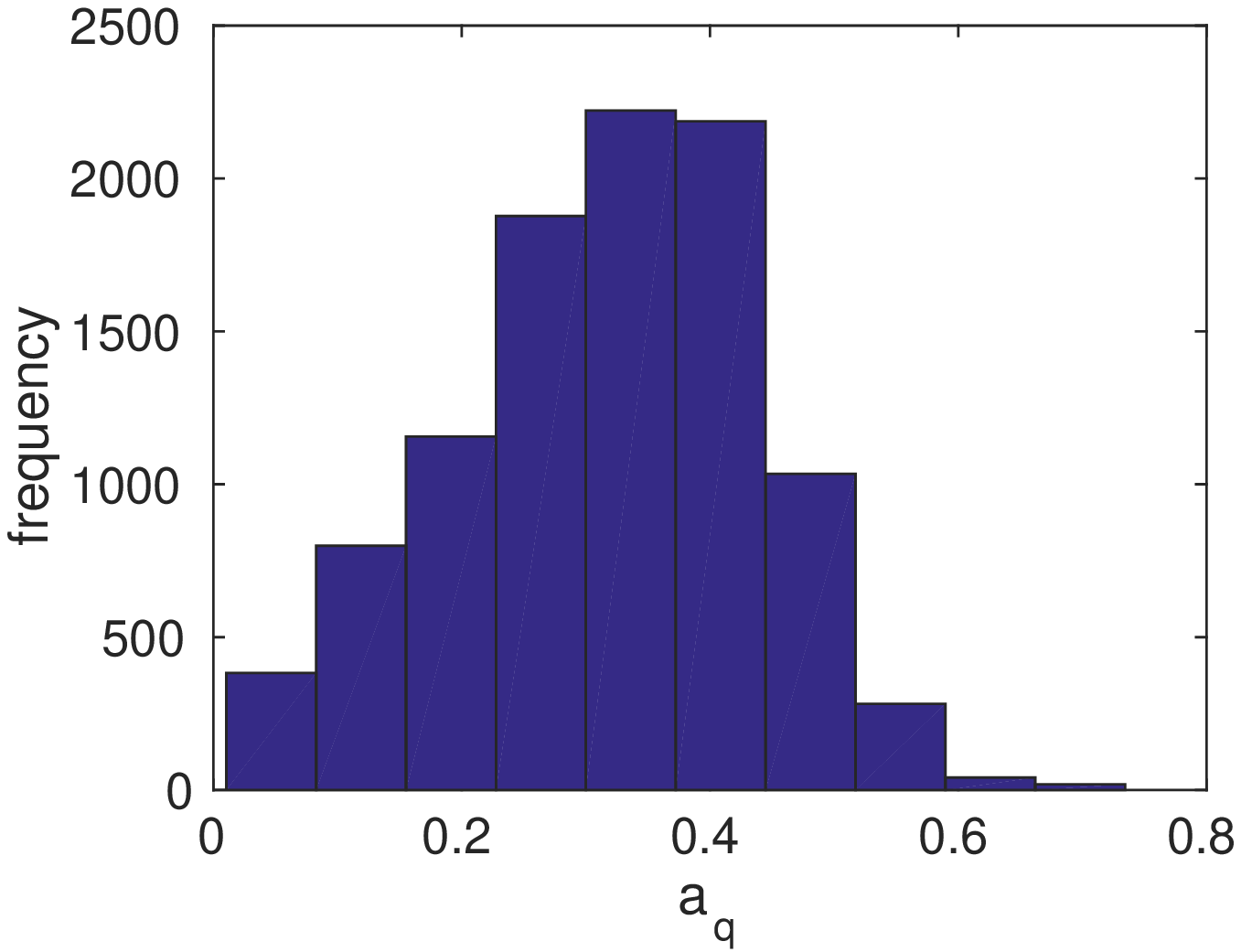,width=0.49\textwidth} 
\caption{(Top panels) The red asterisks give the normalized predator abundance $z(t)$ for simulations of smooth model I \eqref{smooth_1pred2prey} using the parameter values $q_1=1$, $q_2=0.5$, $e=0.25$, and $\beta_1=\beta_2=1$ and fitted values of (left) $r_1\approx1.64$, $r_2\approx0.62$, $m\approx0.11$, $a_q\approx0.02$, and $k\approx31$ and (right) $r_1\approx2.54$, $r_2\approx0.61$, $m\approx0.21$, $a_q\approx0.04$, and $k\approx67$. To guide the eye, we show the simulation in red between the asterisks. We show the normalized data using blue circles, and we show blue lines between them to guide the eye. We also show bar plots for the frequency of (center panels) $k$ values and (bottom panels) $a_q$ values at the strictest tolerance level ($Tol_{10}\approx0.00789$ in the left panels and $Tol_{15}\approx0.0258$ in the right panels) using the PMC ABC method \cite{beaumont2009PMCABC} for (left) selective and (right) unselective predator groups in spring in Lake Constance in 1991. Each frequency plot represents a random weighted sample (of size 10000) from the PMC ABC's posterior distribution of the parameter values accepted at the strictest tolerance level. The squared distances [see Equation \eqref{squaredDistance}] between the asterisks (model prediction) and circles (data) are (left) 0.0094 and (right) 0.0102. For more details on parameter fitting, see Appendix \ref{appendixParamFitting}. The unselective predator group consists of data for {\em Rimostrombidum lacustris}, and the selective predator group consists of data for {\em Balanion planctonicum}.
} 
\label{simulationAndDataSmoothSys1_1991} 
\end{figure}

\begin{figure}[!] 
\centering
\epsfig{file=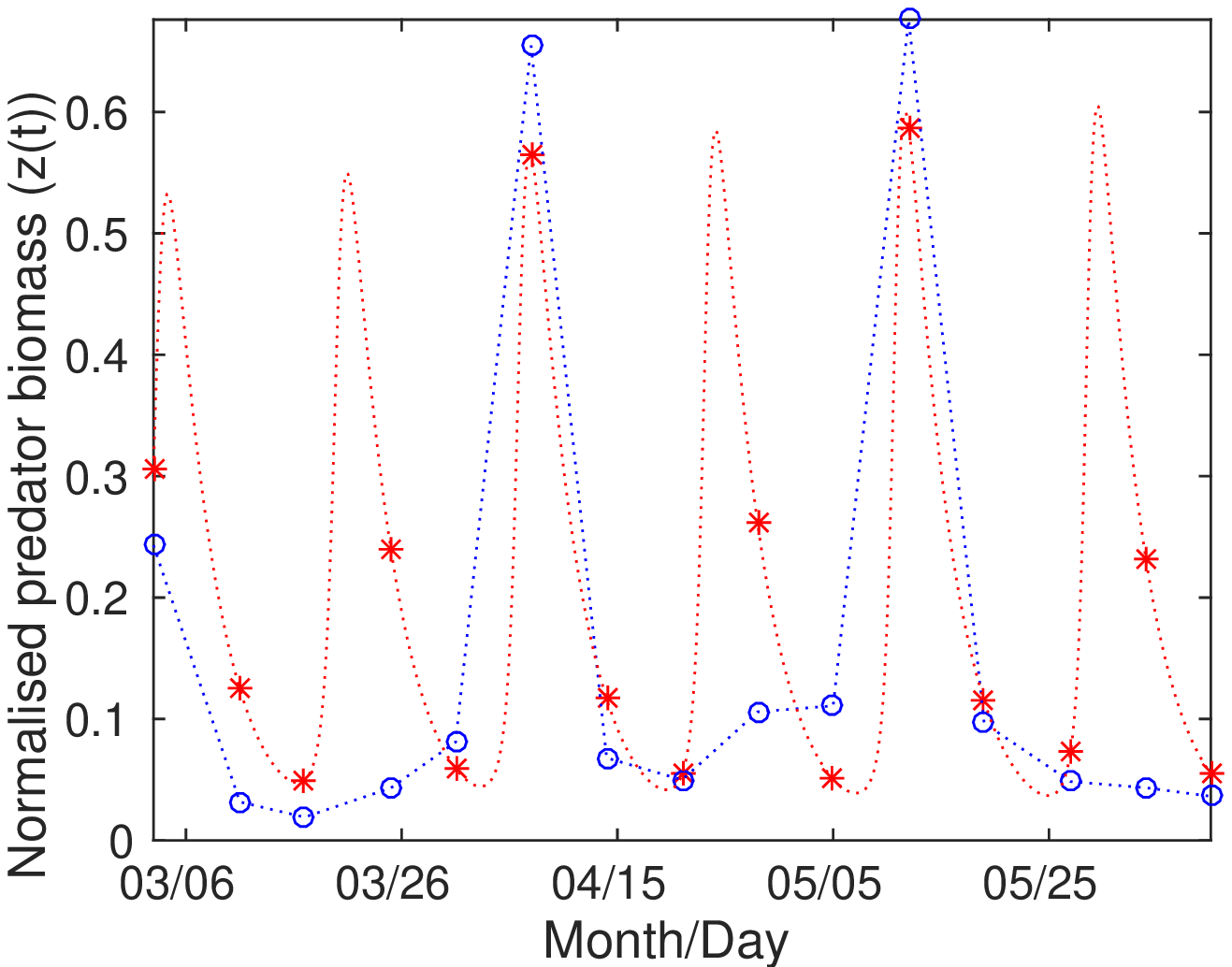,width=0.49\textwidth} 
\epsfig{file=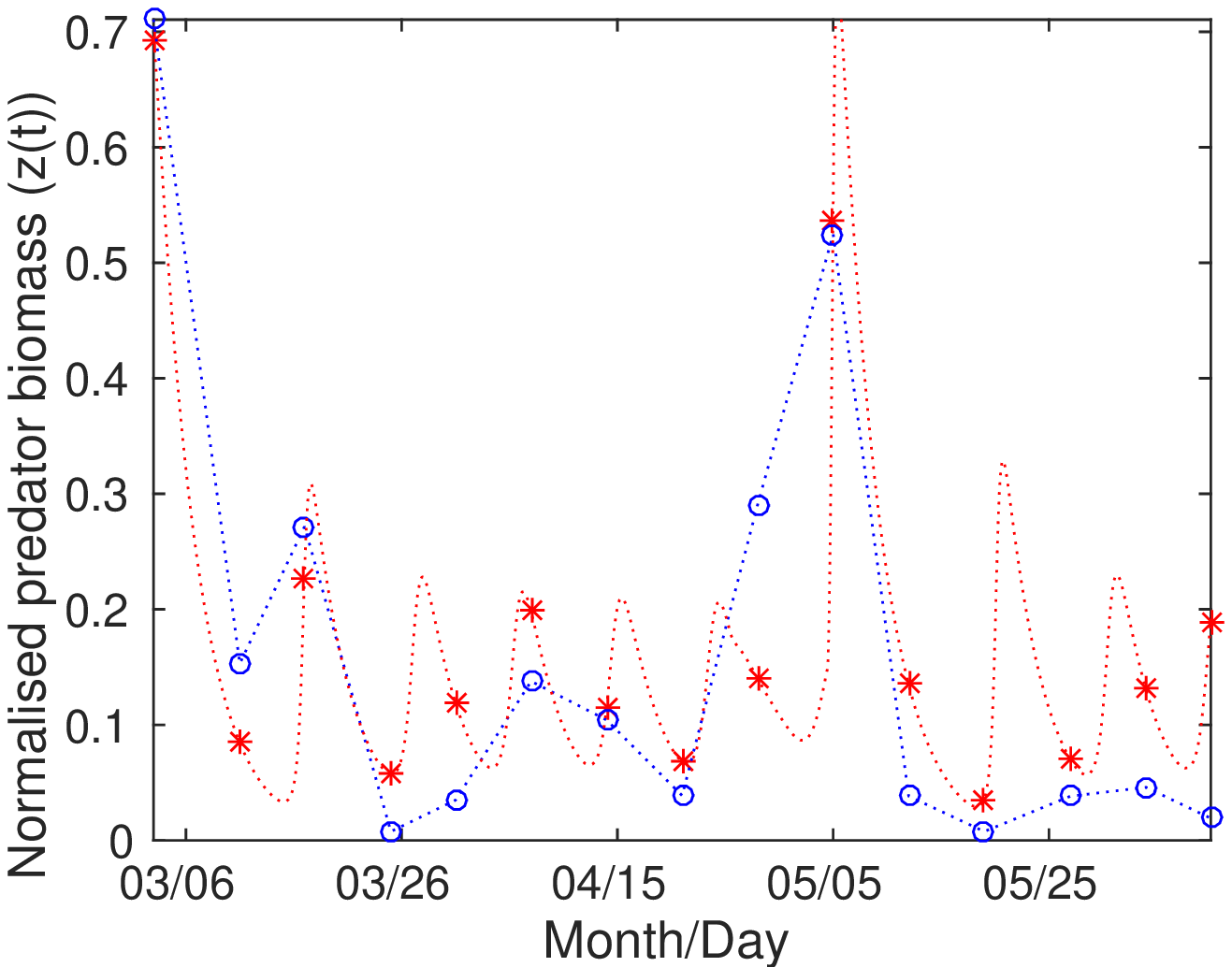,width=0.49\textwidth} 
\epsfig{file=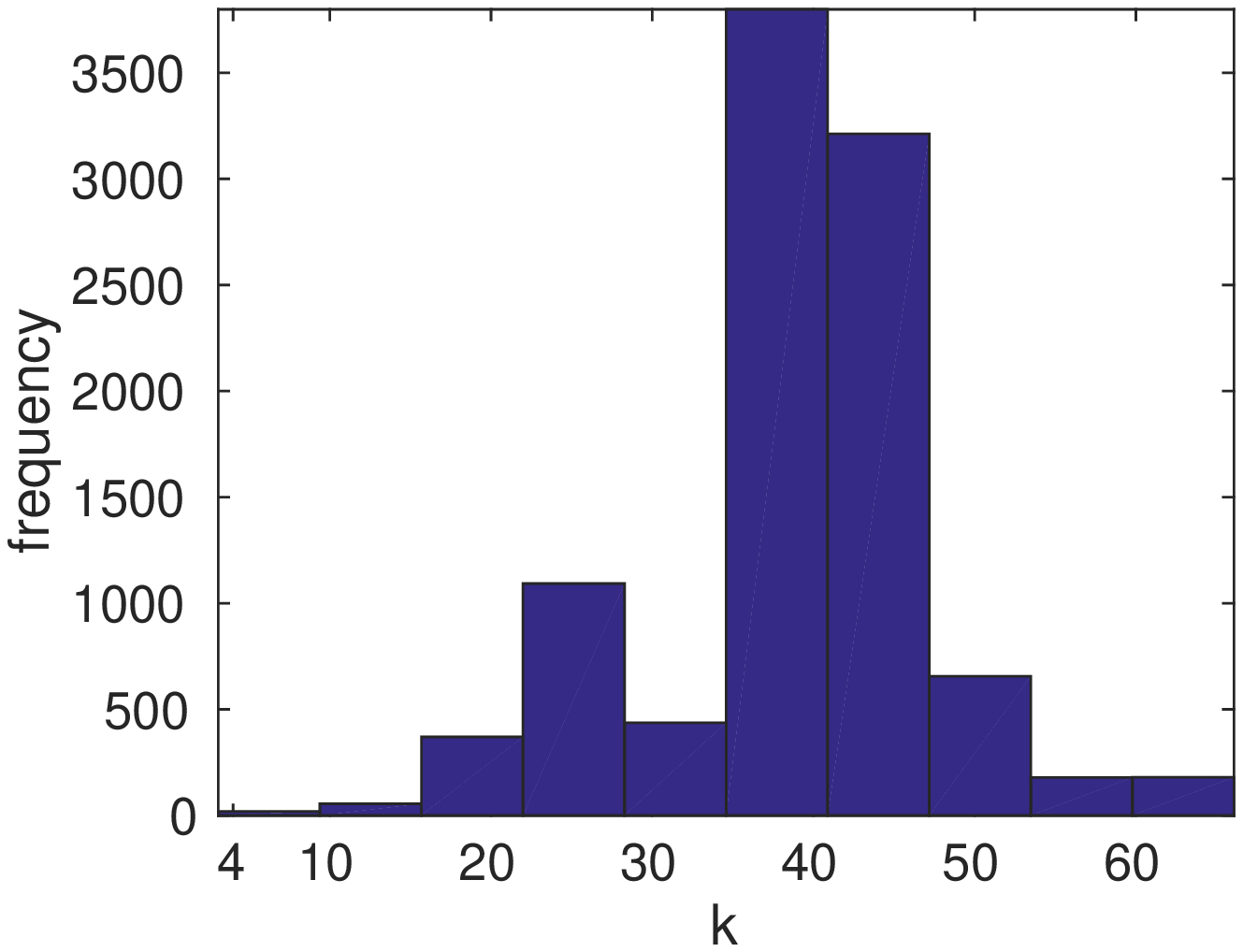,width=0.49\textwidth} 
\epsfig{file=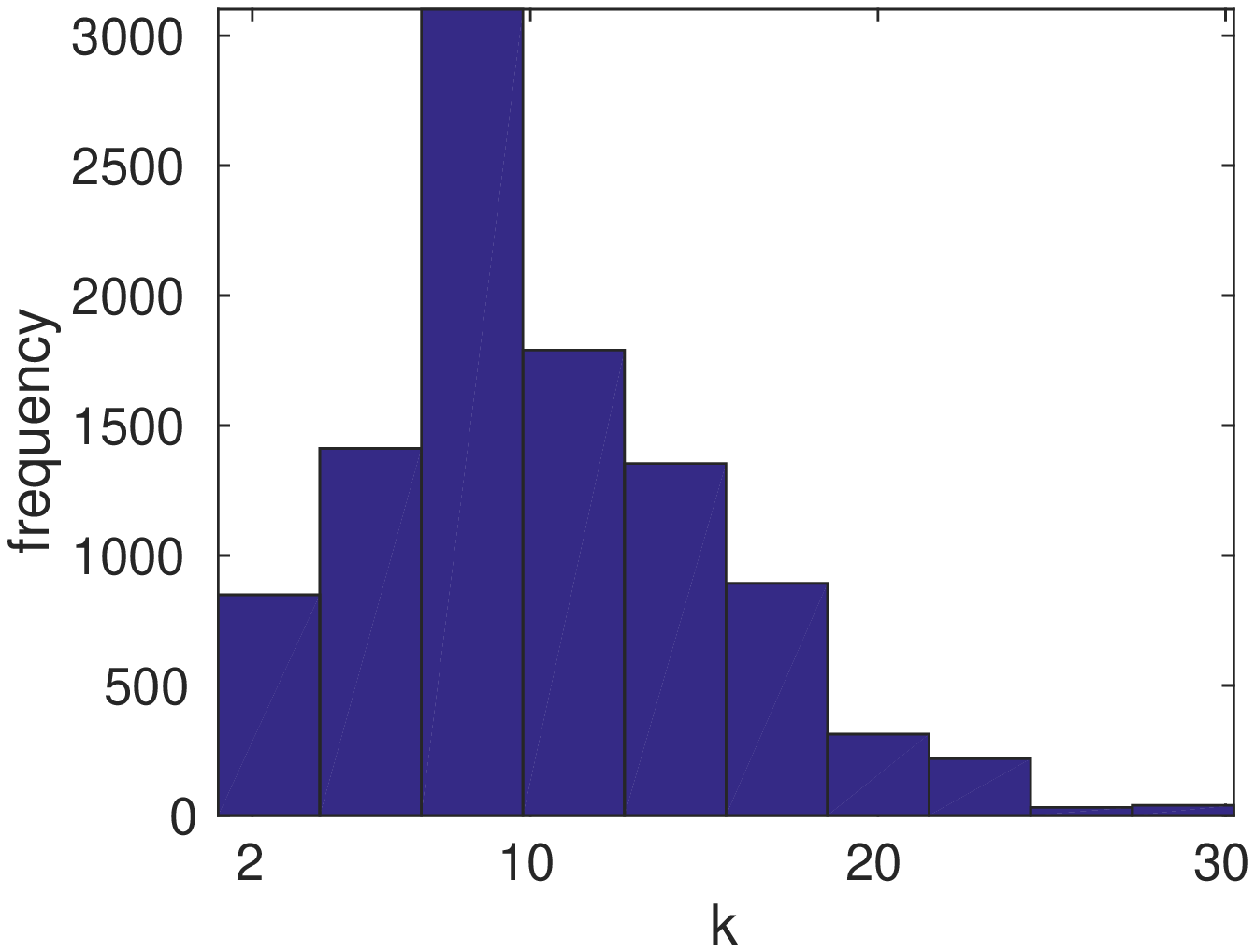,width=0.49\textwidth} 
\epsfig{file=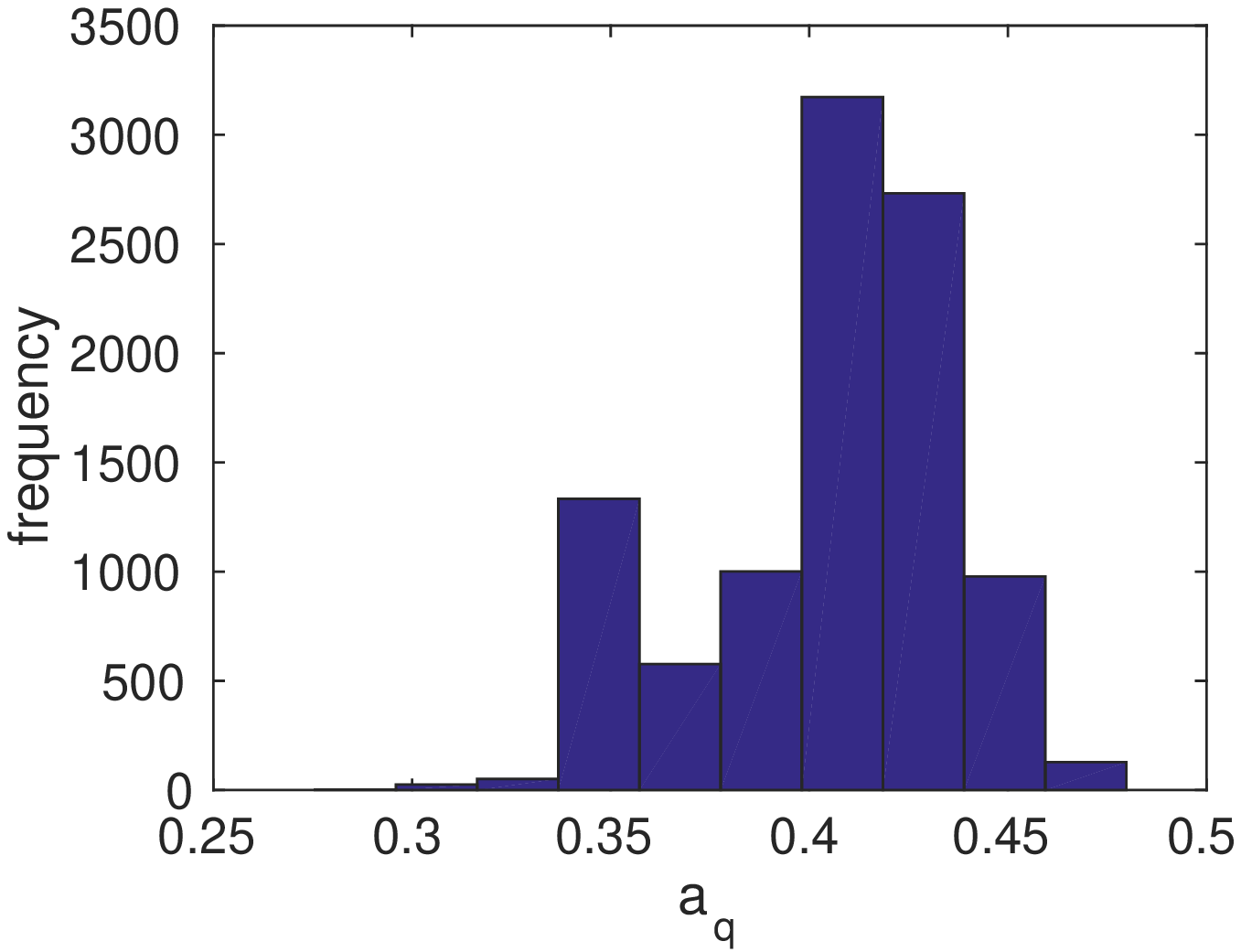,width=0.49\textwidth} 
\epsfig{file=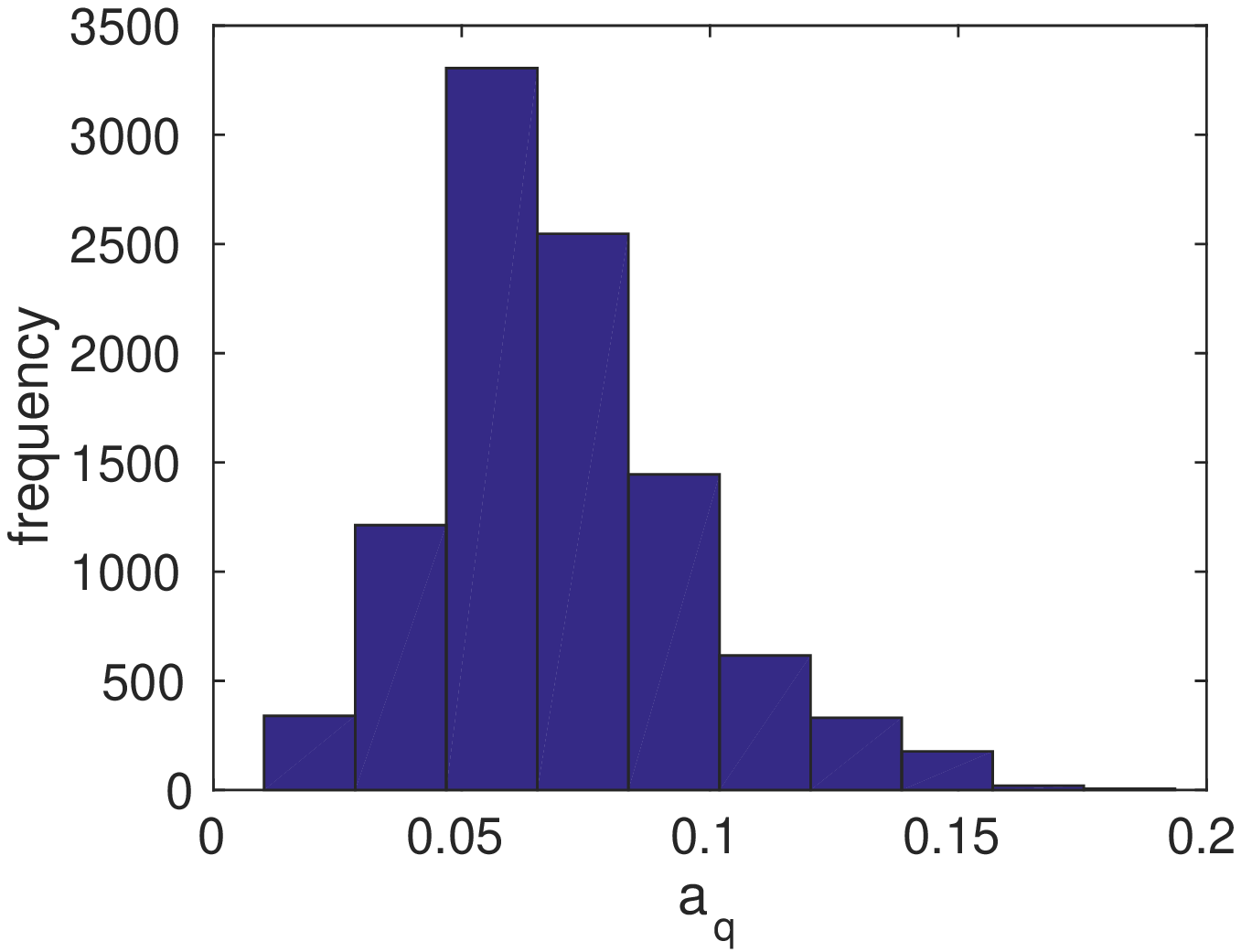,width=0.49\textwidth} 
\caption{(Top panels) The red asterisks give the normalized predator abundance $z(t)$ for simulations of smooth model I \eqref{smooth_1pred2prey} for parameter values $e=0.25$ and $\beta_1=\beta_2=1$ and fitted values of (left) $r_1\approx1.12$, $r_2\approx0.76$, $m\approx0.25$, $a_q\approx0.42$, and $k\approx38$ and (right) $r_1\approx2.32$, $r_2\approx0.51$, $m\approx0.27$, $a_q\approx0.027$, and $k\approx16$. To guide the eye, we show the simulation in red between the asterisks. We show the normalized data using blue circles, and we show blue lines between them to guide the eye. We also show bar plots for the frequency of (center panels) $k$ values and (bottom panels) $a_q$ values at the strictest tolerance level ($Tol_{15}\approx0.0213$ in the left panels and $Tol_{15}\approx0.0229$ in the right panels) using the PMC ABC method \cite{beaumont2009PMCABC} for (left) selective and (right) unselective predator groups in spring in Lake Constance in 1998. Each frequency plot represents a random weighted sample (of size 10000) from the PMC ABC's posterior distribution of the parameter values accepted at the strictest tolerance level. The squared distance [see Equation \eqref{squaredDistance}] between the asterisks (model prediction) and circles (data) is (left) 0.0090 and (right) 0.0061. For more details on  parameter fitting, see Appendix \ref{appendixParamFitting}. The unselective predator group consists of data for {\em Rimostrombidum lacustris}, and the selective predator group consists of data for {\em Balanion planctonicum}.} 
\label{simulationAndDataSmoothSys1_1998} 
\end{figure} 


\subsection{Comparison of smooth model II simulations and Lake Constance data}\label{subsection_comparisonwDataSysII}

To compare simulations of the smooth model \eqref{smoothSys2FullSystem} to data, we fit the prey growth rates $r_1$ and $r_2$, the predator mortality rate $m$, and a perturbation $\nu$ in the predator population from the coexistence steady state in Equation \eqref{coexistStStSSII} for $a_q=q_2=0.5$ (so that all four eigenvalues of the coexistence steady state are purely imaginary). We thus use $(p_1(0),p_2(0),z(0),q(0))=(a_qm(r_1+r_2)/[e(r_1a_q+r_2q_2)],m(r_1+r_2)/[e(r_1a_q+r_2q_2)],\nu(r_1+r_2),r_1/(r_1+r_2))$ as our initial value for the model simulations to infer values for $\nu$ that minimize the distance in Equation \eqref{squaredDistance} between the data points and the model prediction for these points. Thus, a small perturbation $\nu$ suggests a gradual diet change and $q$ oscillating around its equilibrium value, whereas one can interpret a large perturbation from the equilibrium as rapid changes in the diet (and the dynamics of $q$).

Smooth model II (just like smooth model I) reproduces the peak predator densities, and it seems that smooth model II best fits the data when there is a large perturbation from the coexistence steady state. See Figures \ref{simulationAndDataSmoothSys2_1991_normalisedData} and \ref{simulationAndDataSmoothSys2_1998_normalisedData}. For the year 1991, we predict that the selective predator group switches its diet less frequently than the unselective predator. Additionally, $q(t)$ reaches its maximum and minimum values. In contrast, for the unselective predator, there is a change from increasing $q(t)$ to decreasing in $q(t)$ at some intermediate value. See the dynamics of $q(t)$ at the bottom of the top panels of Figures \ref{simulationAndDataSmoothSys2_1991_normalisedData} and \ref{simulationAndDataSmoothSys2_1998_normalisedData}. We predict that the switching of the selective predator occurs more often in year 1998 than in year 1991 (and in 1998, it occurs also more often than that of the unselective predator). See Figure \ref{simulationAndDataSmoothSys2_1998_normalisedData}.

\begin{figure}[!] 
\centering
\epsfig{file=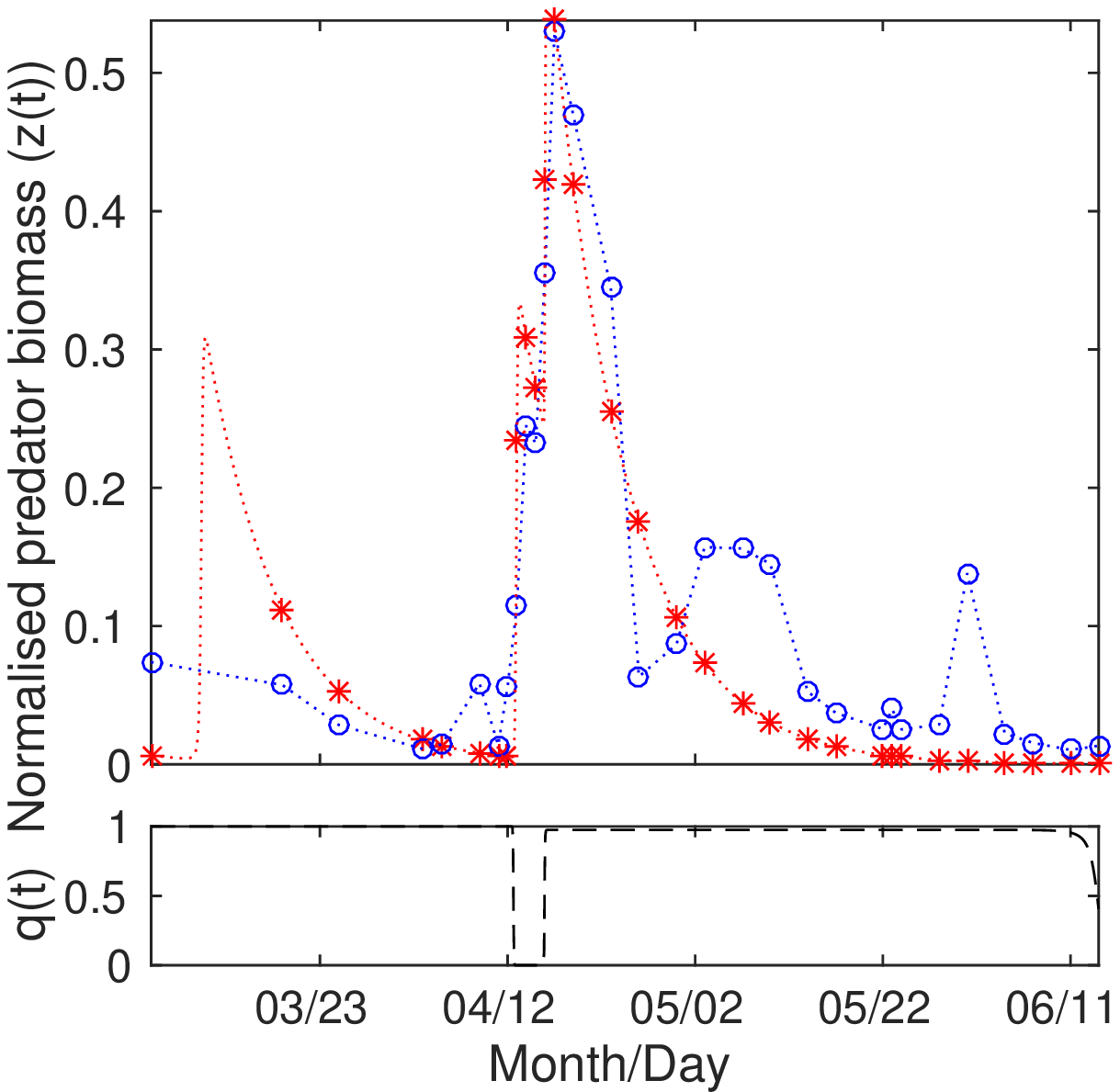,width=0.49\textwidth} 
\epsfig{file=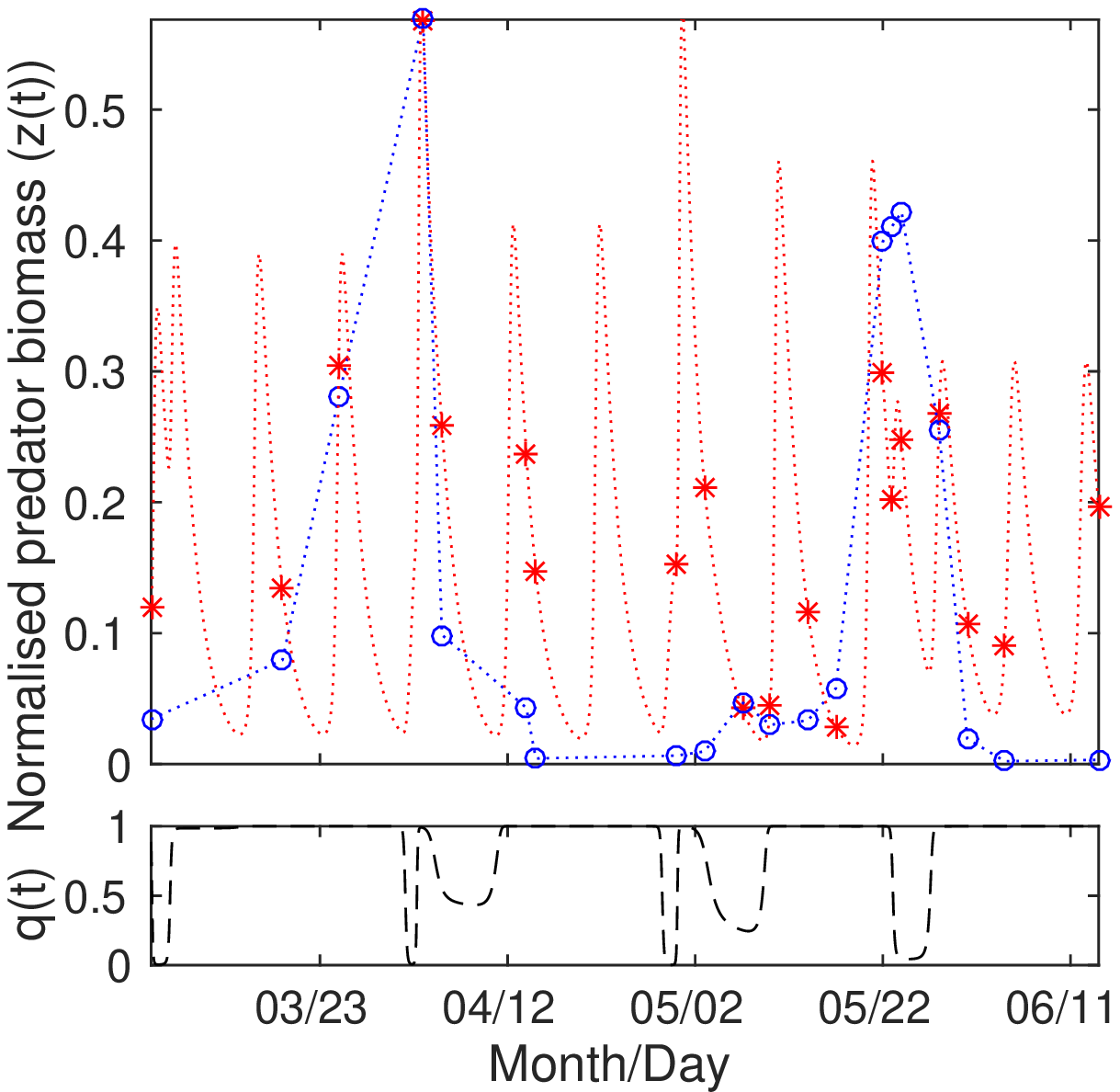,width=0.49\textwidth} 
\epsfig{file=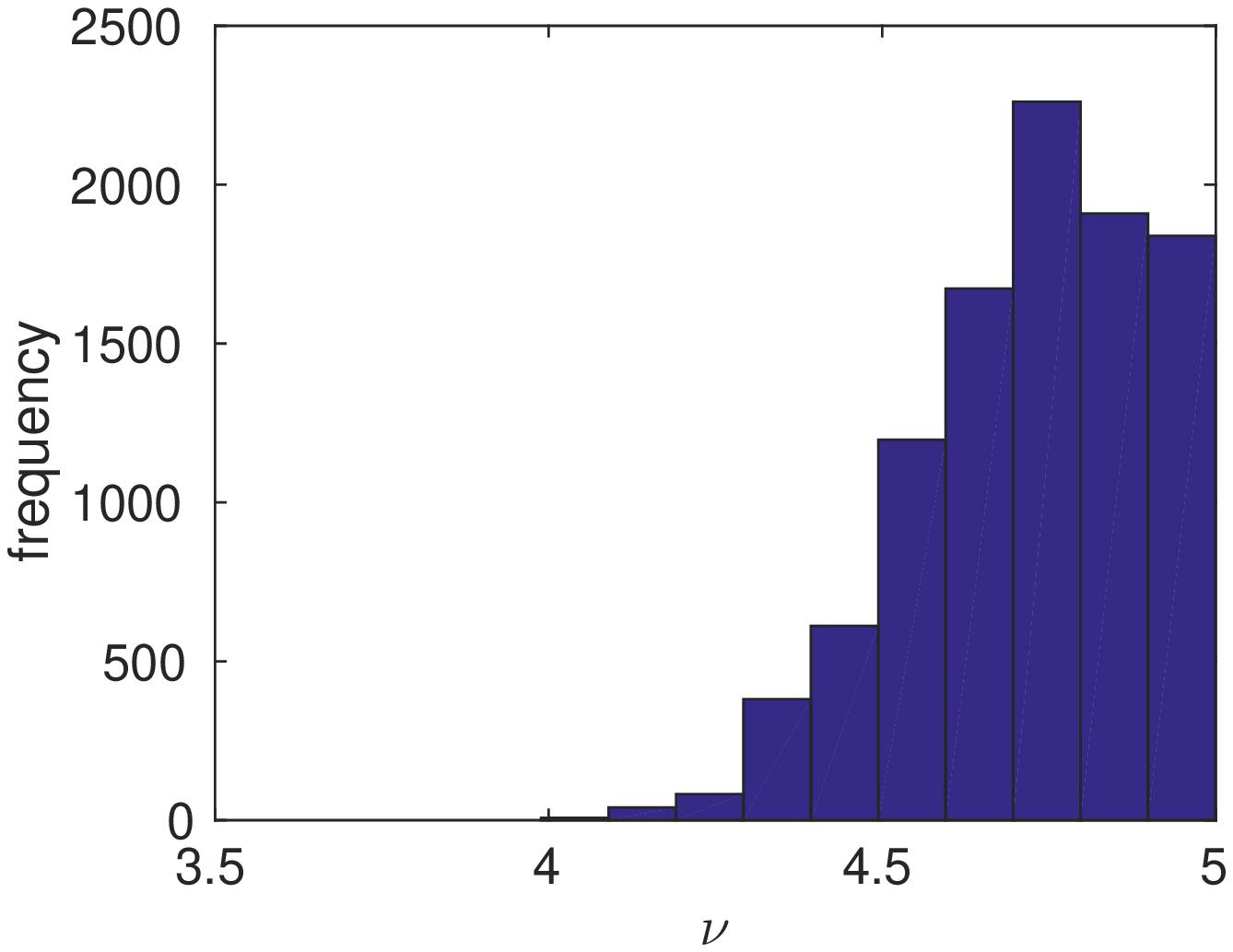,width=0.49\textwidth} 
\epsfig{file=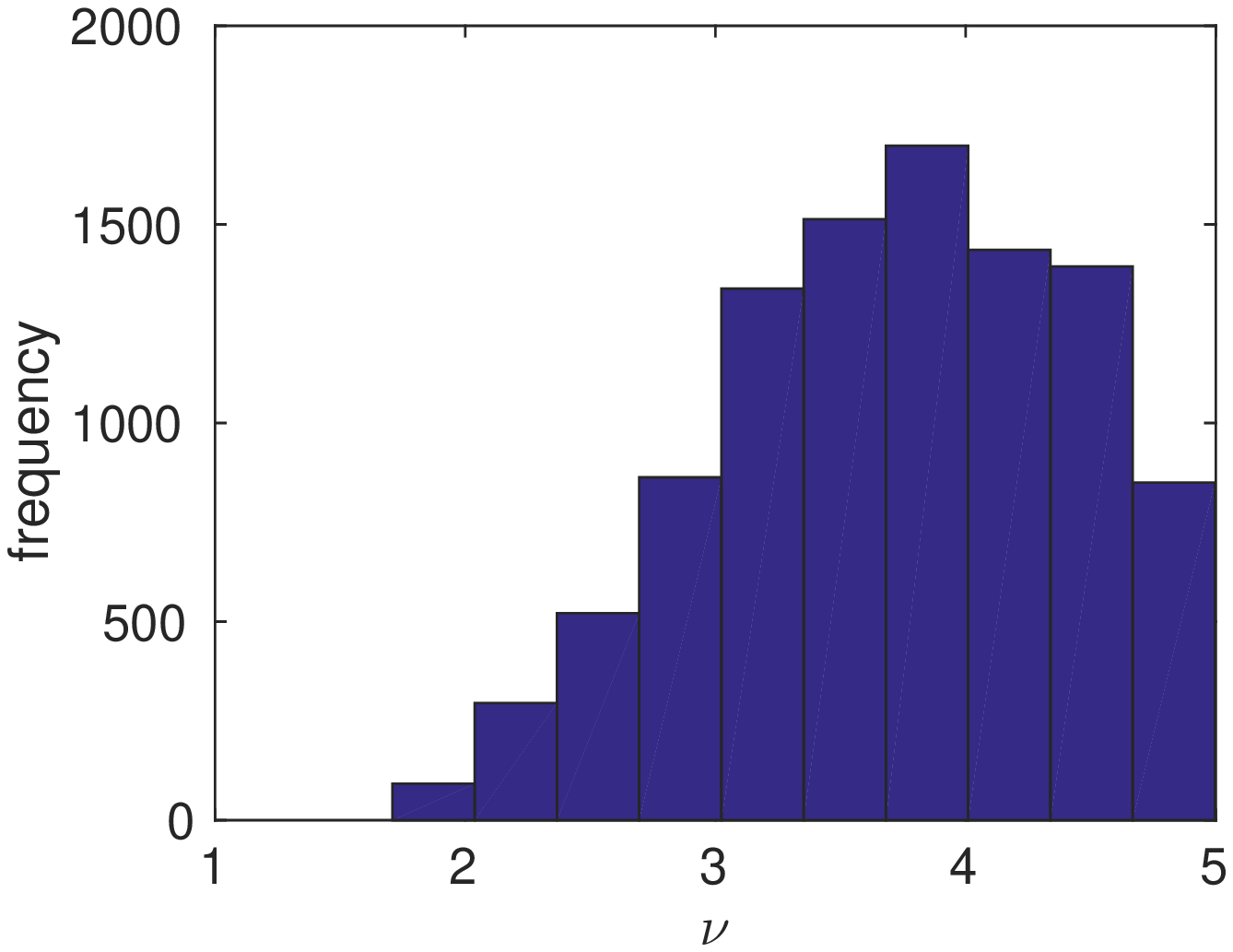,width=0.49\textwidth} 
\caption{(Top panels) The red asterisks give the normalized predator abundance $z(t)$ for simulations of smooth model II \eqref{smoothSys2FullSystem} with an initial value of $z(0)=\nu(r_1+r_2)$; the steady-state densities in \eqref{coexistStStSSII}; parameter values of $e=0.25$, $\beta_1=\beta_2=1$, and $a_q=q_2=0.5$; and fitted values of (left) $r_1\approx3.00$, $r_2\approx0.62$, $m\approx0.12$, and $\nu\approx 4.8$ and (right) $r_1\approx2.21$, $r_2\approx0.33$, $m\approx0.48$, and $\nu\approx4.6$. To guide the eye, we show the simulation in red between the asterisks. We show the normalized data using blue circles, and we plot blue lines between them to guide the eye. (Bottom panels) We show bar plots for the frequency of $\nu$ values at the strictest tolerance level ($\text{Tol}_{10}\approx0.00815$ in the left panel and $\text{Tol}_{13}\approx0.0233$ in the right panel) using the PMC ABC method \cite{beaumont2009PMCABC} for (left) selective and (right) unselective predator groups in spring in Lake Constance in 1991. Each frequency plot represents a random weighted sample (of size 10000)  from the PMC ABC's posterior distribution of the parameter values at the strictest tolerance level. The squared distance [see Equation \eqref{squaredDistance}] between the asterisks (model prediction) and circles (data) is (left) 0.0038 and (right) 0.0151. For more details on parameter fitting, see Appendix \ref{appendixParamFitting}. The unselective predator group consists of data for {\em Rimostrombidum lacustris}, and the selective predator group consists of data for {\em Balanion planctonicum}.} 
\label{simulationAndDataSmoothSys2_1991_normalisedData} 
\end{figure}

\begin{figure}[!] 
\centering
\epsfig{file=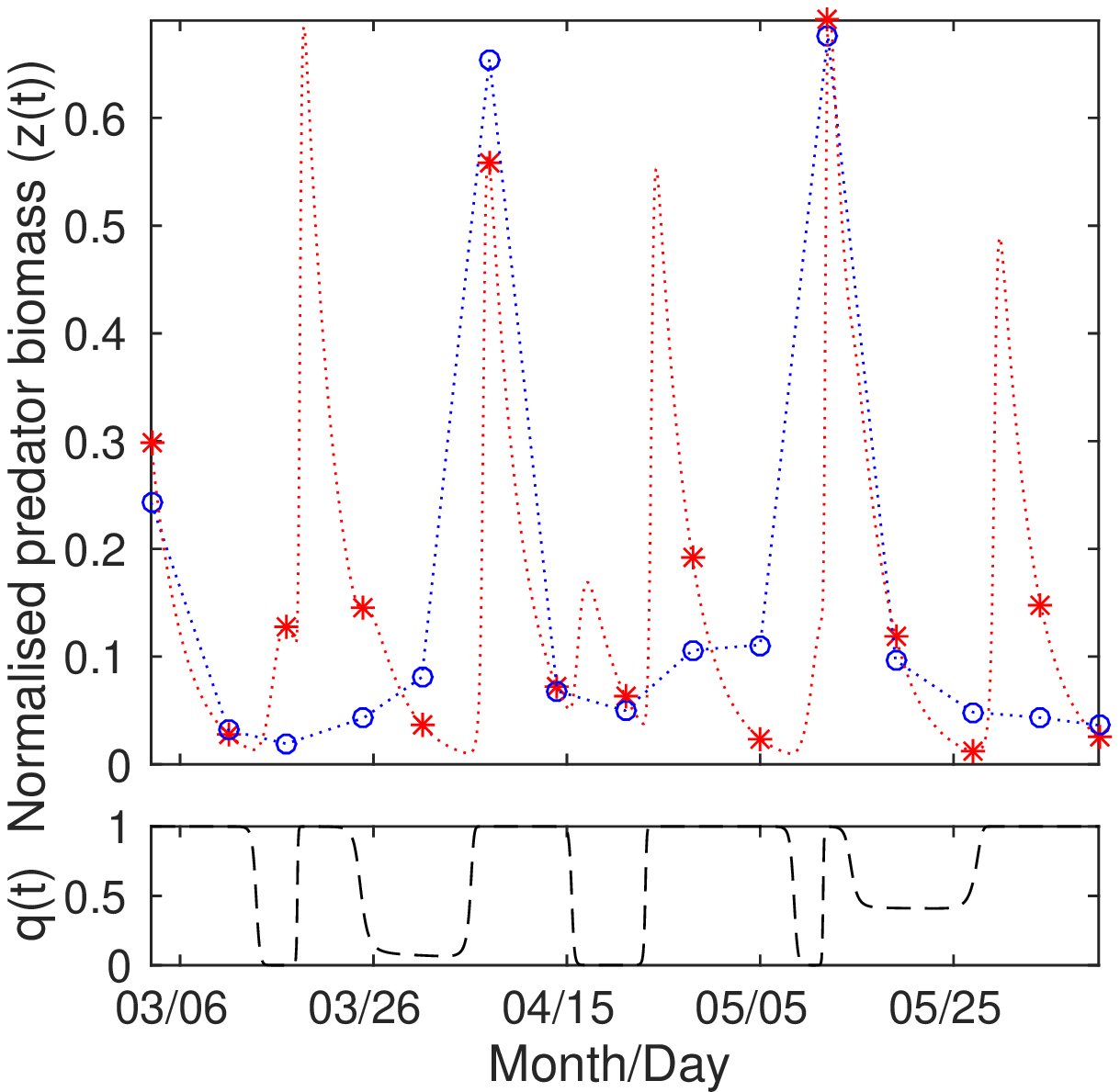,width=0.49\textwidth} 
\epsfig{file=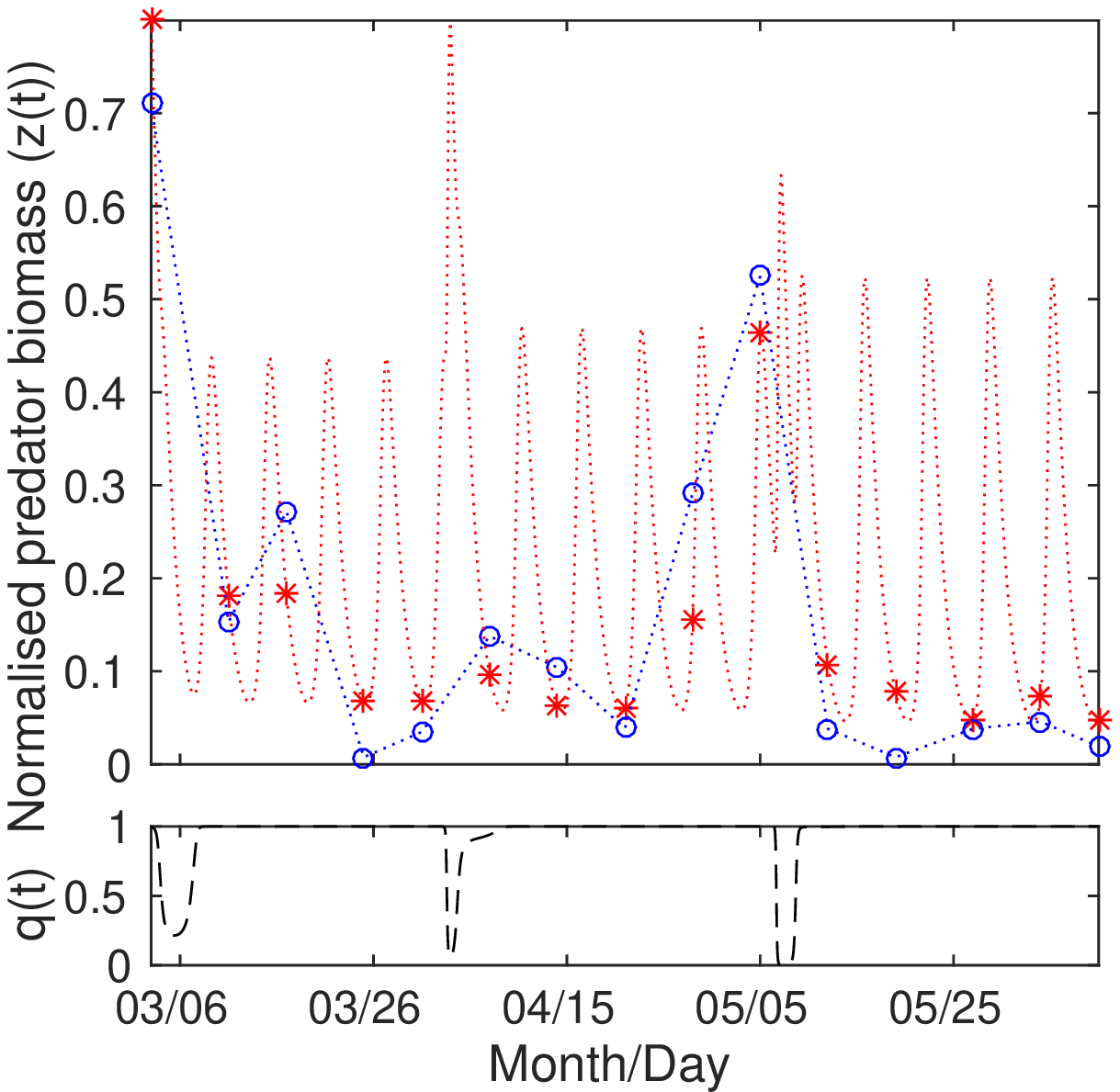,width=0.49\textwidth} 
\epsfig{file=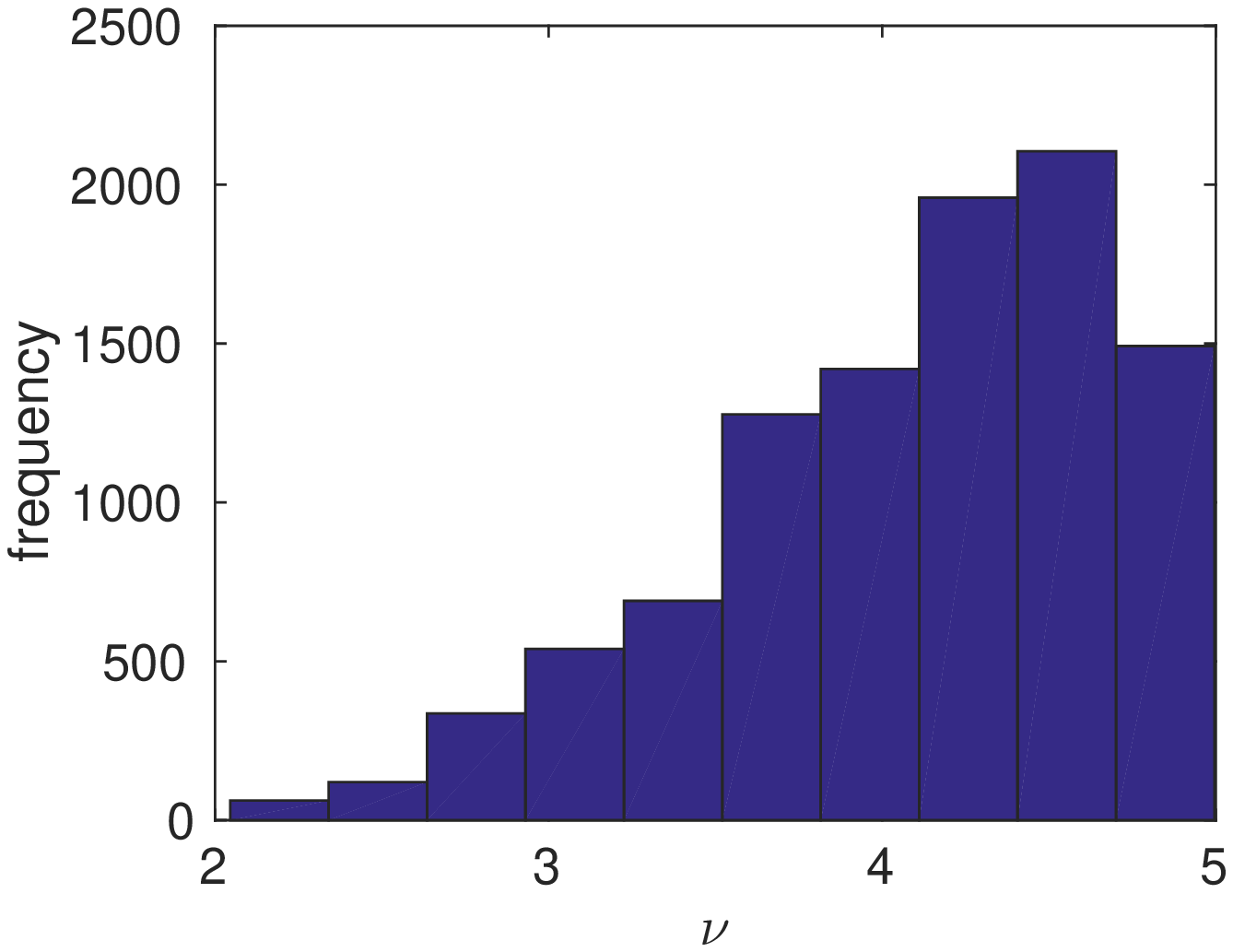,width=0.49\textwidth} 
\epsfig{file=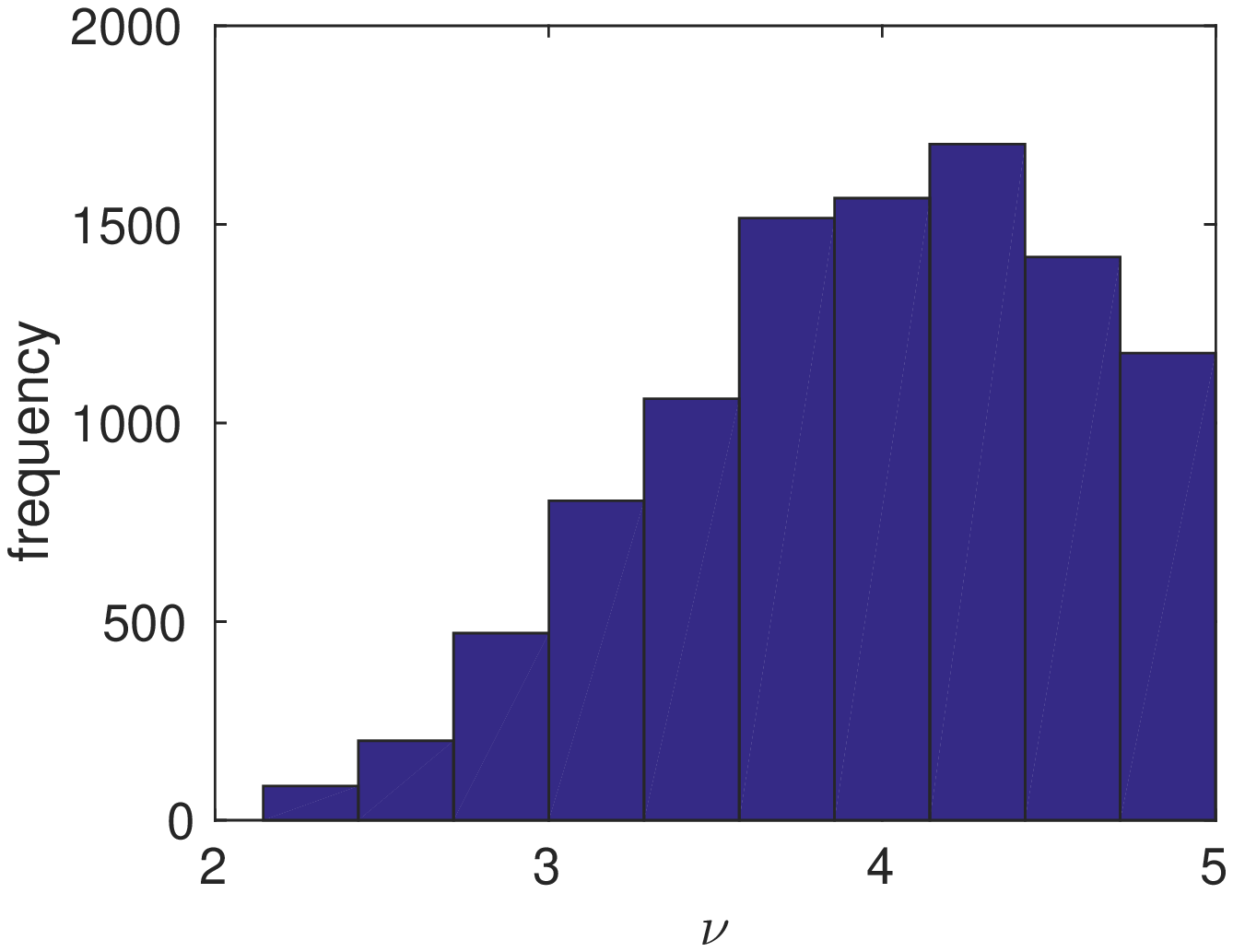,width=0.49\textwidth} 
\caption{(Top panels) The red asterisks give the normalized predator abundance $z(t)$ for simulations of smooth model II \eqref{smoothSys2FullSystem} with an initial value of $z(0)=\nu(r_1+r_2)$; the steady-state densities in \eqref{coexistStStSSII}; parameter values of $e=0.25$, $\beta_1=\beta_2=1$, and $a_q=q_2=0.5$; and fitted values of (left) $r_1\approx1.62$, $r_2\approx0.40$, $m\approx0.30$, and $\nu\approx4.8$ and (right) $r_1\approx1.95$, $r_2\approx0.24$, $m\approx0.72$, and $\nu\approx3.58$. To guide the eye, we show the simulation in red between the asterisks. We show the normalized data using blue circles, and we plot blue lines between them to guide the eye. (Bottom panels) We show bar plots for the frequency of $\nu$ values at the strictest tolerance level ($\text{Tol}_{13}\approx0.0245$ in the left panel and $\text{Tol}_{13}\approx0.0231$ in the right panel) using the PMC ABC method \cite{beaumont2009PMCABC} for (left) selective and (right) unselective predator groups in spring in Lake Constance in 1998. Each frequency plot represents a random weighted sample (of size 10000) from the PMC ABC's posterior distribution of the parameter values at the strictest tolerance level. The squared distance [see Equation \eqref{squaredDistance}] between the asterisks (model prediction) and circles (data) is (left) 0.0043 and (right) 0.0039. For more details on parameter fitting, see Appendix \ref{appendixParamFitting}. The unselective predator group consists of data for {\em Rimostrombidum lacustris}, and the selective predator group consists of data for {\em Balanion planctonicum}.}
\label{simulationAndDataSmoothSys2_1998_normalisedData} 
\end{figure}

To evaluate how well smooth model II \eqref{smoothSys2FullSystem} predicts prey abundance data (to which it was not fitted), we simulate it with parameter values that we obtain by fitting the model to the unselective predator in year 1991, and our model prediction for prey abundances suggests for an unselective predator in year 1991 that the preferred prey has smaller-amplitude oscillations than the alternative prey. As we show in Figure \ref{model2ValidationCheck_lessSel91}, this differs qualitatively from the data. We obtain the same result for smooth model I \eqref{smooth_1pred2prey} (comparison not shown). Nevertheless, although we only use predator data to fit parameters, smooth model II \eqref{smoothSys2FullSystem} is also able to successfully capture some features of the prey data. As we illustrate in the left panel of Figure \ref{model2ValidationCheck_lessSel91}, such features include the periodicity of the peak densities of the preferred prey populations.

\begin{figure}[!]
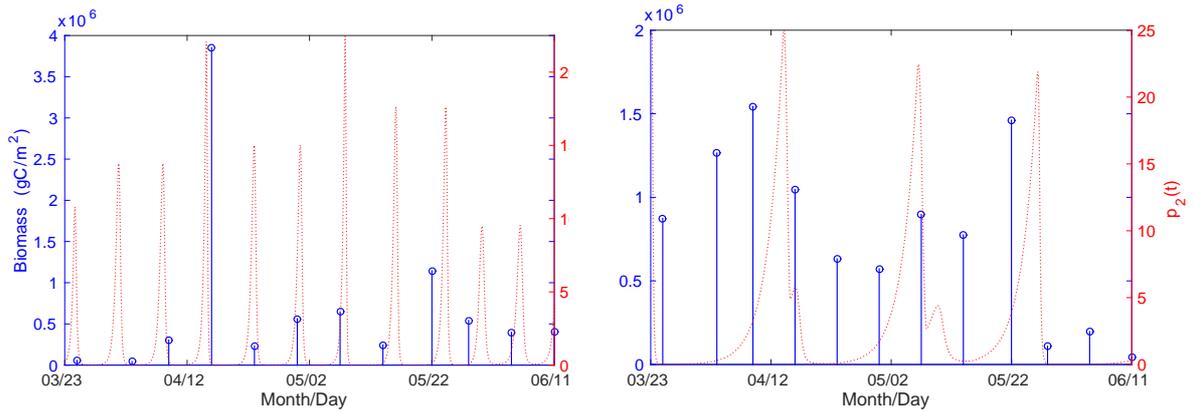
 
\centering
\epsfig{file=lessSel91_M2_p1_predAndData.eps,width=0.5\textwidth} 
\epsfig{file=lessSel91_M2_p2_predAndData.eps,width=0.47\textwidth} 
\caption{(Left, red) Preferred prey abundance $p_1(t)$ and (right, red) alternative prey abundance $p_2(t)$ for simulations of smooth model II \eqref{smoothSys2FullSystem} using the same parameter values as in the right panel of Figure \ref{simulationAndDataSmoothSys2_1991_normalisedData}. (Left, circles) preferred and (right, circles) alternative prey data in spring in Lake Constance in 1991. The preferred prey group consists of data for {\em Cryptomonas ovata}, {\em Cryptomonas marssonii}, {\em Cryptomonas reflexa}, {\em Cryptomonas erosa}, {\em Rhodomonas lens}, and {\em Rhodomonas minuta}. The alternative prey group consists of data for small and medium-sized {\em Chlamydomonas} spp. and {\em Stephanodiscus parvus}.
}
\label{model2ValidationCheck_lessSel91} 
\end{figure}


\section{Discussion}\label{section_discussion}

From a biological perspective, using a smooth dynamical system allows us to relax the assumption of a ``discontinuous'' predator of the piecewise-smooth system \eqref{1pred2prey}. When the discontinuity is smoothed out using hyperbolic tangent functions, as in smooth model I \eqref{smooth_1pred2prey}, we can use data to determine the steepness of the transition in the predator's feeding behavior for a particular predator type. Indeed, our parameter fitting to Lake Constance data suggests that one can model prey switching of either selective or unselective predator species with a steep hyperbolic tangent function. Additionally, our parameter fitting of smooth model I \eqref{smooth_1pred2prey} predicts that the best fit to the data occurs in the parameter regime in which the coexistence steady state is unstable. Additionally, simulations of our smooth model II \eqref{smoothSys2FullSystem}, which regularizes the abrupt change in the predator's diet choice by considering a predator trait as a system variable, exhibit rapid predator-trait dynamics (i.e., the temporal evolution of the predator's desire to consume the preferred prey $p_1$) suggesting that the best fit to data occurs when the change of diet is abrupt.  

From a modeling perspective, the piecewise-smooth system \eqref{1pred2prey} incorporates the effects of a predator's adaptive change of diet in response to prey abundance, whereas smooth system II \eqref{smoothSys2FullSystem} (with an appropriate choice of parameters) explores rapid evolutionary change in a predator's desire to consume its preferred prey \cite{ourfastslowPaper}. Consequently, smooth system II models a different mechanism (namely, \emph{rapid evolution} \cite{Fussman2007}) than the piecewise-smooth system (which models \emph{phenotypic plasticity} \cite{Kellyetal_phenplast2012}) for how rapid adaptation affects population dynamics \cite{Shimadaetal2010,Yamamichietal2011}. It has been suggested that one should be more likely to expect a stable equilibrium from models that account for phenotypic plasticity than in those that account for rapid evolution, because plastic genotypes respond faster than nonplastic genotypes to fluctuating environmental conditions \cite{Yamamichietal2011}. Our modeling work is consistent with this hypothesis, as the piecewise-smooth system \eqref{1pred2prey} converges to a steady state for a large region of phase space \cite{OurPaper}, but the same steady state is unstable --- except for one point ($a_q=q_2$), at which it is stable but nonhyperbolic --- in smooth model II \eqref{smoothSys2FullSystem}.

Smooth model I \eqref{smooth_1pred2prey} necessitates the incorporation of a parameter $k$ that influences the system's qualitative behavior, whereas smooth system II \eqref{smoothSys2FullSystem} has the same number of parameters as the piecewise-smooth system \eqref{1pred2prey} but includes an additional system variable. It can thus be advantageous to study the piecewise-smooth system, especially if many species are included, because it allows one to avoid adding new parameters and/or variables. The hyperbolic tangent functions in \eqref{smooth_1pred2prey} and the increased dimensionality of \eqref{smoothSys2FullSystem} both add complications to analytical calculations and parameter fitting. On the plus side, there are many more standard numerical techniques and standard theory to determine the stability of equilibria using linear stability analysis and study bifurcations for smooth dynamical systems than for piecewise-smooth ones. One needs to use more involved methods for theory and numerical computations for piecewise-smooth dynamical systems, and development of these techniques is an active area of research \cite{pw-sBook}. However, one can derive an analytical expression (specifically, $h=\beta_1p_1-a_q\beta_2p_2=0$) for the flow at the discontinuity boundary of \eqref{1pred2prey}, and the available theory for piecewise-smooth dynamical systems identifies the bifurcation that takes place in \eqref{1pred2prey} as $a_q$ crosses the value $a_q=q_2/q_1$ \cite{OurPaper}. These results help facilitate understanding of the behavior of \eqref{1pred2prey} and can be used when analyzing the ciliate--algae dynamics predicted by the piecewise-smooth model \cite{OurPaper}.

Both of our smooth models successfully reproduce the peak population densities and suggest that a parameter regime --- when $a_q<q_2/q_1$ for smooth model I \eqref{smooth_1pred2prey} and for a large perturbation from the steady state for smooth model II \eqref{smoothSys2FullSystem} --- fits the data for ciliate predators in Lake Constance in the springs of 1991 and 1998. (Note that our initial distributions for these parameters in the fitting algorithm include both parameter regimes in which the coexistence steady state is stable and ones in which it is unstable.) Additionally, when using the parameters that we obtain from fitting smooth model II to data for the unselective predator in year 1991, we obtain agreement between our model's prediction and the periodicity of the peak prey abundances. Both of our smooth models predict a higher frequency of peak densities than what we observe in the available data points. A high period in population oscillations is possible for small organisms, such as plankton, with short lifespans and large population densities. Making measurements more frequently than biweekly would be a good way to try to validate 
 or refute the periodicity predicted by the smooth models. Additionally, using data comparison to choose between different competing models would be an effective way to help increase current understanding of the use of a piecewise-smooth model as a simplification of a steep transition in plankton-feeding behavior. Such comparisons would also be valuable more generally in numerous applications. In practice, one can carry out such a model comparison by implementing algorithms of model-based statistical inference methods (e.g., approximate Bayesian computation, as in the present paper) \cite{Toni2009} or by using existing toolboxes for system identification (e.g., the ones implemented in {\sc{Matlab}} \cite{MATLAB2014}). 

 One can further investigate model predictions for the trait dynamics and compare them to results from controlled laboratory experiments with genetically diverse prey and/or predator populations in which one records the dynamics of the genetic diversity. Parameter fitting to Lake Constance data suggests that the best fit occurs in a parameter regime in which the predator trait dynamics oscillate abruptly between the maximum and minimum values. In a study of two plankton predators and their evolving algal prey, Hiltunen et al. \cite{Hiltunenetal} showed and discussed experimental evidence for periods of dominance of one predator followed by a rapid switch to a dominance by the other. In \cite{Hiltunenetal}, the switch in predator dominance arose from interactions between changes in the predator population and changes in the frequency of the prey type that develops a predator defense mechanism against one of the two predators. Motivated by the above findings, it is also interesting to consider a model in which one incorporates a time-scale difference between demographic and predator-trait dynamics \cite{ourfastslowPaper}. 


\section{Conclusions}\label{section_conclusions}

To increase biological insight into the experimentally-observed adaptive feeding behavior of unselective and selective ciliate predators on two different types of prey, we constructed two ordinary differential-equation models for prey switching. In one model (``smooth model I''), we represent the transition from one diet to another using the hyperbolic tangent function; in the other (``smooth model II''), we added a new system variable to describe the diet switch in the system (and we hence increased the system's dimension by $1$). In constructing these models, we relaxed the simplifying assumption of a ``discontinuous'' predator feeding behavior in a piecewise-smooth dynamical system that was used previously to suggest prey switching as a putative mechanistic explanation for the observed dynamics \cite{OurPaper}. Based on our results from fitting parameters of the two smooth systems to data on freshwater plankton, we conclude that the best fit to the data occurs when prey switching is rapid (and hence steep in the continuous models) and that the simplifying assumption of discontinuous predator feeding behavior appears to be justified. 

Similar to earlier investigations, such as \cite{jeffrey2011nondeterminism,leifeld2015persistence}, our study provides an illustrative example of both similarities and differences between a discontinuous system and smooth regularizations to it. When piecewise-smooth dynamical systems are used to simplify transitions in applications --- such as approximating a cubic function in a membrane potential in models of spiking neurons \cite{mckean1970nagumo}, Hill functions in models of gene regulatory networks \cite{Glass1975}, changes in the Earth's reflectivity due to ice melt in climate models \cite{abbot2011jormungand}, and more --- understanding the extent to which the behavior of the corresponding smooth and piecewise-smooth systems agree is crucial for generating both accurate model simplifications and accurate predictions.

Finally, using the data that we currently possess, it is difficult to determine which of the three models (i.e., a piecewise-smooth model or the two smooth systems with an adaptive predator) provides a better mechanistic explanation for the observations of ciliate--algae dynamics in spring in Lake Constance. To enhance model selection, it would be very useful to collect data to improve analysis of the steepness of prey switching, the functional form of the preference tradeoff, and the periodicity of the population oscillations. Nevertheless, the construction of models using alternative modeling approaches, transforming between them, and comparing them to data can greatly increase understanding of the underlying mechanisms of biological systems. 


\section*{Acknowledgements} 

We thank John Hogan, Philip Maybank, Frank Schilder, and Frits Veerman for helpful discussions. We thank Ursula Gaedke for sending us the Lake Constance data, which were obtained as part of the Collaborative Programme SFB 248 funded by the German Science Foundation. SHP was supported by Osk. Huttunen Foundation (OHF) and Engineering and Physical Sciences Research Council (EPSRC) through the Oxford Life Sciences Interface Doctoral Training Centre and by People Programme (Marie Curie Actions) of the European Union's Seventh Framework Programme (FP7/2007-2013) under REA grant agreement \#609405 (COFUNDPostdocDTU). PKM would like to thank the Mathematical Biosciences Institute (MBI) at Ohio State University, for partially supporting this research. MBI receives its funding through the National Science Foundation grant DMS1440386.


\appendix{}

\section{Stability of the coexistence steady state in smooth model I}\label{appendixStabilityI}

In this appendix, we prove Propositions \ref{propositionSmoothModelI_1} and \ref{propositionSmoothModelI_2}.
\begin{proof}
The Jacobian for Equation \eqref{smooth_1pred2prey} is 
\begin{equation}
	\begin{bmatrix}
r_1-\frac{z}{2}B\left(1+p_1kC\right) & \frac{zp_1ka_q}{2}BC & \frac{-p_1}{2}B\\
\frac{zp_2k}{2}BC & r_2-\frac{z}{2}C\left(1+p_2ka_qB\right) & \frac{-p_2}{2}C\\
\frac{eq_1z}{2}B\left(1+p_1kC\right)-\frac{eq_2p_2zk}{2}BC & \frac{eq_2z}{2}C\left(1+p_2ka_qB\right)-\frac{eq_1p_1zka_q}{2}BC & \frac{eq_1p_1}{2}B+\frac{eq_2p_2}{2}C-m\\
	\end{bmatrix} \label{JacobianoftheSmoothSys1}\,,
\end{equation}
where 
\begin{align}
	A&=\text{tanh}\left(k\left(p_1-a_qp_2\right)\right)\,, \nonumber \\ 
	B&=1+A\,, \\
	C&=1-A\,.\nonumber
\end{align}
At the coexistence steady state \eqref{coexistenceStStSmoothSysI}, the Jacobian \eqref{JacobianoftheSmoothSys1} is given by
\begin{equation}
	\frac{1}{r_1+r_2}\begin{bmatrix}
-2r_1r_2k\tilde{p}_1 & 2r_1r_2ka_q\tilde{p}_1 & -r_1\tilde{p}_1\\
2r_1r_2k\tilde{p}_2 & -2r_1r_2ka_q\tilde{p}_2 & -r_2\tilde{p}_2\\
eq_1r_1(r_1+r_2)+2er_1r_2k(q_1\tilde{p}_1-q_2\tilde{p}_2) & eq_2r_2(r_1+r_2)+2er_1r_2ka_q(q_2\tilde{p}_2-q_1\tilde{p}_1) & 0\\
	\end{bmatrix} \label{JacobianoftheSmoothSys1_reducedOnce}\,,
\end{equation}
which we henceforth denote by $J$ for the rest of the present appendix. The characteristic polynomial of \eqref{JacobianoftheSmoothSys1_reducedOnce} is
\begin{equation}
	 |\lambda I - J| = \lambda^3 + a \lambda^2 + b \lambda + c\,,
\end{equation}
where 
\begin{equation}
	\begin{split}
		a &= \frac{2 k (p_1 + a_q p_2) r_1 r_2}{r_1 + r_2}\,, \\
		b &= \frac{1}{(r_1 + r_2)^2} e \left(2 k p_1^2 q_1 r_1^2 r_2 + 
   p_2 q_2 r_2^2 (r_1 + 2 a_q k p_2 r_1 + r_2) \right. \\
   &\quad \left. + p_1 r_1 (-2 k p_2 q_2 r_1 r_2 + q_1 (r_1^2 + r_1 r_2 - 2 a_q k p_2 r_2^2))\right)\,, \\
	   c &= \frac{2 e k p_1 p_2 r_1 r_2 (a_q q_1 r_1 + q_2 r_2)}{r_1 + r_2}\,.
	\label{eqn:coefficients}
\end{split}
\end{equation}
According to the Routh--Hurwitz criterion \cite{routh1877,hurwitz1895}, the coexistence steady state \eqref{coexistenceStStSmoothSysI} is stable if and only if the coefficients in \eqref{eqn:coefficients} satisfy $a>0$, $c>0$, and $a b-c>0$. The conditions $a>0$ and $c>0$ are satisfied because of the positivity of the system parameters. To study the third condition, we write $ab-c$ as a polynomial in $k$. We thereby obtain
\begin{align*}
	a b-c &= 
 \frac{2 r_1 r_2}{e (a_q q_1 r_1 + q_2 r_2)^2 (r_1 + r_2)^2}
 \left( p_1 r_1 - a_q p_2 r_2 \right)
s(k) \,,
\end{align*}
where
\begin{equation}
	s(k) = s_2 k^2 + s_1 k + s_0\,, 
\end{equation}
with
\begin{align*}
	s_0 &= \frac12 e^2 q_1 q_2 r_1 r_2 \log\left(\frac{r_1}{r_2}\right)\left[2 a_q q_1 r_1 + 
	2 q_2 r_2 + (-a_q q_1 r_1 + q_2 r_2) \log\left(\frac{r_1}{r_2}\right)\right]\,, \\
     s_1 &= e m \left[(r_1 + r_2) (a_q^2 q_1^2 r_1^2 - q_2^2 r_2^2) - 
     r_1 r_2 (a_q^2 q_1^2 r_1 + q_2^2 r_2 - 3 a_q q_1 q_2 (r_1 + r_2)) \log\left(\frac{r_1}{r_2}\right)\right]\,,\\
     s_2 &= 4 a_q m^2 (a_q q_1 - q_2) r_1 r_2 (r_1 + r_2)\,.
\end{align*}
Using the steady state \eqref{coexistenceStStSmoothSysI}, we see that
\begin{equation*}
		p_1 r_1 - a_q p_2 r_2 = \frac{a_q k m (r_1^2 - r_2^2) + 
			e (a_q q_1 + q_2) r_1 r_2 \text{arctanh}\left(\frac{r_1 - r_2}{r_1 + r_2}\right)}{e k (a_q q_1 r_1 + 
			q_2 r_2)} > 0\,.
\end{equation*}
We have thus established that $a b -c > 0 \Leftrightarrow s(k) > 0$.

The value of $s$ at $k = k_0$ in Equation \eqref{equationFork0} is positive:
\begin{equation}
	s(k_0) = \frac{e^2 q_1 r_1 (a_q q_1 r_1 + q_2 r_2)^2 \log\left(\frac{r_1}{r_2}\right)\left[r_1 + r_2 + r_2 \log\left(\frac{r_1}{r_2}\right)\right]}{2 (r_1 + r_2)} > 0 \,.
\end{equation}
For $a_q \geq q_2/q_1$, the function $s(k)$ is concave up with a positive derivative at $k_0$, because
\begin{equation}
	s'(k_0) = e m (a_q q_1 r_1 + 
	q_2 r_2)\left[(r_1 + r_2) (a_q q_1 r_1 - q_2 r_2) + (3 a_q q_1 - q_2) r_1 r_2 \log\left(\frac{r_1}{r_2}\right)\right] > 0\,.
\end{equation}
Therefore, $a_q \geq q_2/q_1$ implies that $s(k) > 0$ for all $k > k_0$.
Finally, if $a_q < q_2/q_1$, we see that $s(k)$ is a downward-opening parabola. Because $s(k_0)>0$, it follows that $s$ is positive in the interval $(k_0,k_1)$, where
\begin{equation}
	k_1 = \frac{-s_1 + \sqrt{s_1^2 - 4 s_2 s_0}}{2 s_2}\,. 
	\label{expressionOfK1}
\end{equation}
\end{proof}


\section{Stability of the coexistence steady state in smooth model II}\label{appendixStabilityII}

In this appendix, we prove Propositions \ref{propositionSmoothModelII_1} and \ref{propositionSmoothModelII_2}. Both proofs use the characteristic polynomial of the Jacobian of smooth system II \eqref{smoothSys2FullSystem}. This Jacobian is
\begin{align}
	J &=
\begin{pmatrix}
r_1-\tilde{q}\tilde{z} & 0 & -\tilde{q}\tilde{p}_1 & -\tilde{p}_1\tilde{z} \\
0 & r_2-(1-\tilde{q})\tilde{z} & -(1-\tilde{q})\tilde{p}_2 & \tilde{p}_2\tilde{z} \\
e\tilde{q}\tilde{z} & e(1-\tilde{q})q_2\tilde{z} & e\tilde{q}\tilde{p}_1+e(1-\tilde{q})q_2\tilde{p}_2-m & e\tilde{p}_1\tilde{z}-eq_2\tilde{p}_2\tilde{z} \\
\tilde{q}(1-\tilde{q}) & -a_q\tilde{q}(1-\tilde{q}) & 0 & (1-2\tilde{q})(\tilde{p}_1-a_q\tilde{p}_2)
\end{pmatrix}\,.
	\label{smoothSysIIJacobian}
\end{align}
At the coexistence steady state \eqref{coexistStStSSII}, the Jacobian \eqref{smoothSysIIJacobian} is given by
\begin{equation}
J=
\begin{pmatrix}
	0 & 0 & \frac{-r_1a_qm}{e(r_1a_q+r_2q_2)} & \frac{-a_qm(r_1+r_2)^2}{e(r_1a_q+r_2q_2)} \\
	0 & 0 & \frac{-r_2m}{e(r_1a_q+r_2q_2)} & \frac{m(r_1+r_2)^2}{e(r_1a_q+r_2q_2)} \\
	er_1 & er_2q_2 & 0 & \frac{e(a_q-q_2)m(r_1+r_2)^2}{e(r_1a_q+r_2q_2)} \\
\frac{r_1r_2}{(r_1+r_2)^2} & \frac{-a_qr_1r_2}{(r_1+r_2)^2} & 0 & 0
\end{pmatrix} \,,
	\label{smoothSysIIJacobianreduced}
\end{equation}
whose eigenvalues are given by the roots of the characteristic polynomial
\begin{align}
  \text{det}(\lambda I-J)=\lambda^4&+\frac{m\left(eq_2r_2^2+a_qr_1(er_1+2r_2)\right)}{e(a_qr_1+q_2r_2)}\lambda^2 \label{smoothSys2_VststLambdasCharEqn_aqneqq2}\\
  &+\frac{a_qr_1r_2m^2(a_q-q_2)(r_1-r_2)}{e(a_qr_1+q_2r_2)^2}\lambda+\frac{a_qr_1r_2m^2(r_1+r_2)}{e(a_qr_1+q_2r_2)}\,.
\nonumber
\end{align}

\vspace{.5 cm}

\begin{proof} (Proposition \ref{propositionSmoothModelII_1})

For $a_q=q_2$, the $\mathcal{O}(\lambda)$ term in Equation \eqref{smoothSys2_VststLambdasCharEqn_aqneqq2} vanishes. Substituting $u=\lambda^2$ yields
\begin{equation}
 	 u^2 +\frac{m[2r_1r_2+e(r_1^2+r_2^2)]}{e(r_1+r_2)}u+\frac{m^2r_1r_2}{e}\,.
 \label{uSecondOrderEqnForLambda}
\end{equation}
One can write the discriminant of (\ref{uSecondOrderEqnForLambda}) as  
\begin{equation}
	 D=\frac{m^2}{e^2(r_1+r_2)^2} \left((r_1^2+r_2^2)^2\left(e-\frac{4r_1^2r_2^2}{(r_1^2+r_2^2)^2}\right)^2+\frac{4r_1^2r_2^2(r_1^2-r_2^2)^2}{(r_1^2+r_2^2)^2}\right)\,.
\end{equation}
Note that $D$ is always positive, so the two roots ($u_1$ and $u_2$) of \eqref{uSecondOrderEqnForLambda} are both real. Furthermore, because the polynomial (\ref{uSecondOrderEqnForLambda}) is increasing and positive at the intersection of (\ref{uSecondOrderEqnForLambda}) with the vertical axis, the roots of the polynomial (\ref{uSecondOrderEqnForLambda}) are both negative. Consequently, the four eigenvalues $\lambda_j$ (with $j \in \{1,2,3,4\}$) consist of two complex-conjugate pairs with $0$ real part: $\lambda_{1,2}=\pm\sqrt{u_1}i$ and $\lambda_{3,4}=\pm\sqrt{u_2}i$. We thus see that all eigenvalues are purely imaginary.

\end{proof}

\vspace{.5 cm}

\begin{proof} (Proposition \ref{propositionSmoothModelII_2})

First, we prove by contradiction that there is at least one eigenvalue with a non-zero real part. Assume that all four eigenvalues are purely imaginary. One can then write the characteristic polynomial \eqref{smoothSys2_VststLambdasCharEqn_aqneqq2} as
\begin{equation}
	\chi(\lambda) = \prod_{j=1}^4 (\lambda - \imath y_j)\,, \quad y_j \in \mathbb{R}\,. 
\end{equation}
Expanding $\chi$, we see that the $\mathcal{O}(\lambda)$ coefficient is 
\begin{equation*}
 	\imath (y_1 y_2 y_3 + y_1 y_2 y_4 + y_1 y_3 y_4 + y_2 y_3 y_4)\,,
\end{equation*}	
which is purely imaginary. However, \eqref{smoothSys2_VststLambdasCharEqn_aqneqq2} has a real coefficient for $\mathcal{O}(\lambda)$ that is non-zero for $a_q \neq q_2$. Therefore, there exists at least one eigenvalue with a non-zero real part.

To complete the proof, we show that there are two eigenvalues whose real parts have opposite signs. We denote the roots of  \eqref{smoothSys2_VststLambdasCharEqn_aqneqq2} by $\lambda_j$ (with $j \in \{1,2,3,4\}$), and we order the roots so that the real part of $\lambda_1$ is non-zero. Because $\sum_{j=1}^4 \lambda_j = \text{Tr }(J) = 0$, at least one of $\lambda_2$, $\lambda_3$, or $\lambda_4$ must have a real part whose sign is opposite to that of $\lambda_1$. Consequently, the steady state is unstable. 
\end{proof}

\section{Parameter fitting}\label{appendixParamFitting}

We study the parameter-fitting problem through Bayesian inference. In contrast to least-squares fitting, this allows one to study the results from the posterior parameter distribution rather than just a single value that gives the best fit as a result of an optimization method. Because our modeling results in a posterior associated to the data, we
fit parameters to data with approximate Bayesian computation (ABC) combined with a population Monte Carlo (PMC) method. (See p.~987 of \cite{beaumont2009PMCABC}.)


\subsection{Smooth model I} \label{section:paramFittingM1}

Let $z(t;r_1,r_2,m,a_q,k)$ denote the solution of \eqref{smooth_1pred2prey} with initial values $(p_1(0),p_2(0),z(0)) = (1,1,1)$, and let $\mathbf z$ denote the available measurement data on the predator population. The data were measured at time instances $t_i$, so --- without measurement errors --- the data would be ${\mathbf z}_i = z(t_i;r_1,r_2,m,a_q,k)$ for some unknown, true parameter values. We account for the presence of measurement errors by incorporating normally-distributed noise into the results of our model simulations. Specifically, for given parameter values $(\sigma,r_1,r_2,m,a_q,k)$, the model prediction ${\mathbf z}^*$ is described element-wise as ${\mathbf z}^*_i \sim \mathcal{N}(z(t_i;r_1,r_2,m,a_q,k),\sigma^2 (1+P_{*,{\rm max}})^2)$, where $P_{*,{\rm max}}$ is the maximum predator density in a model trajectory. We compute a model trajectory by simulating the model for about 400 days and discarding the first about 60 days (corresponding to the two winter months January and February) as a transient. We then align the peak abundances in the data and in the model trajectory obtained by simulating the model with the given parameter values. One can construe this procedure as introducing a phase shift in the model results before calculating the distance between it and the data \footnote{Such a procedure results in several candidate parameter sets that need to be rejected (e.g., because they predict a steady state); this increases computation time. An alternative to using a phase shift is to fit the initial values simultaneously with the model parameters. In such an approach, one can compare the distances between the periodic orbits that result from the model to those in the data. However, it is not clear how one should choose a reasonable time window for fitting the initial values and whether such a modification would yield more effective parameter fitting than with the current approach.}. Because we do not know the variance of measurement errors in advance, we incorporate the estimation of $\sigma$ in our parameter-fitting process.

We assume that the estimated parameters are mutually independent and have known, finite lower and upper bounds. In the Bayesian framework, this information is described by independent uniform probability densities. Consequently, we let $\sigma \sim \mathcal{U}(0,0.1)$, $r_1\sim \mathcal{U}(1,3)$, $r_2\sim \mathcal{U}(0.01,0.8)$, $m \sim \mathcal{U}(0.1,1)$, $a_q \sim \mathcal{U}(0.01,2)$, and $k \sim \mathcal{U}(1,100)$. We chose these lower and upper bounds based on studying the literature (for example, \cite{TirokGaedke2010}) and by simulating smooth systems I and II numerically. Because of the independence of the parameters, one can express the joint prior as the product of the probability density functions of the parameters.

As a measure of discrepancy, we employ the Euclidean between normalized data and model trajectories:
\begin{equation}
	d({\mathbf z}^*, {\mathbf z}) = \frac{1}{N} \left( \frac{{\mathbf z}^*}{\left|{\mathbf z}^*\right|} - \frac{\mathbf z}{\left|\mathbf z\right|} \right)^2\,.\label{squaredDistance}
\end{equation}
We determine a decreasing sequence of tolerance thresholds by setting the threshold of the subsequent iteration to be either (1) the distance between the data and the model prediction of the best 10\% quantile of the current step or (2) equal to the tolerance threshold of the current step (if the distance of the 10\% quantile is larger than the current tolerance threshold). Based on several test runs, we choose the following initial tolerance levels. For the first smooth dynamical system \eqref{smooth_1pred2prey}, we choose $\text{Tol}_1 \approx 0.022$ for a selective predator in 1991, $\text{Tol}_1 \approx 0.038$ for an unselective predator in 1991, $\text{Tol}_1 \approx 0.0525$ for a selective predator in 1998, and $\text{Tol}_1 \approx 0.0475$ for an unselective predator in 1998.

Finally, to obtain an approximation of the posterior, we iterate the PMC ABC algorithm for 10--15 times to collect 2000 candidate parameters (i.e., values for $\sigma$, $r_1$, $r_2$, $m$, $a_q$, and $k$) at each iteration that yield a distance between the perturbed model prediction and the data that is smaller than a given tolerance threshold.


\subsection{Smooth model II}

Our parameter-fitting procedure for smooth model II deviates only slightly from the process that we described in Appendix~\ref{section:paramFittingM1}. We now assume that $m \sim \mathcal{U}(0.05,1)$ and $\nu \sim \mathcal{U}(1.1,5)$. The parameter $\nu$ denotes a perturbation in the predator population from the coexistence steady state, so we simulate smooth model II with $(p_1(0),p_2(0),z(0),q(0))=(a_qm(r_1+r_2)/[e(r_1a_q+r_2q_2)],m(r_1+r_2)/[e(r_1a_q+r_2q_2)],\nu(r_1+r_2),r_1/(r_1+r_2))$ as the initial value. We also omit the parameter $a_q$, because it is fixed at $a_q=q_2=0.5$. We choose the tolerance thresholds  analogously to what we did in Appendix~\ref{section:paramFittingM1}: $\text{Tol}_1 \approx 0.02$ for a selective predator in 1991, $\text{Tol}_1 \approx 0.03$ for an unselective predator in 1991, $\text{Tol}_1 \approx 0.0375$ for a selective predator in 1998, and $\text{Tol}_1\approx 0.032$ for an unselective predator in 1998.


\bibliography{twoSmoothSystems_refs4}{}
\bibliographystyle{siam}

\end{document}